\newtheorem{lemma}{Lemma}
\newtheorem{theorem}{Theorem}
\newtheorem{proposition}{Proposition}
\newtheorem{thm}{Theorem}[section]
\newtheorem{lem}[thm]{Lemma}
\newtheorem{rem}[thm]{Remark}
\newcounter{bean}
\newcommand{\benuma}{\setlength{\labelwidth}{.25in}
\begin{list}%
{(\alph{bean})}{\usecounter{bean}}}
\newcommand{\eenuma}{\end{list}}
\newcommand{\beginsec}{{\setcounter{equation}{0}}{\setcounter{theorem}{0}}
{\setcounter{lemma}{0}}{\setcounter{definition}{0}}{\setcounter{remark}{0}}
{\setcounter{proposition}{0}} {\setcounter{corollary}{0}} }
\newcommand{\noi}{\noindent}
\newcommand{\bi}{\begin{itemize}}
\newcommand{\ei}{\end{itemize}}
\newcommand{\be}{\begin{enumerate}}
\newcommand{\ee}{\end{enumerate}}
\newcommand{\beqs}{\begin{equation*}}
\newcommand{\eeqs}{\end{equation*}}
\newcommand{\beq}{\begin{equation}}
\newcommand{\eeq}{\end{equation}}
\newcommand{\beqys}{\begin{eqnarray*}}
\newcommand{\eeqys}{\end{eqnarray*}}
\newcommand{\beqy}{\begin{eqnarray}}
\newcommand{\eeqy}{\end{eqnarray}}
\newcommand{\E}{\mathbb E}
\newcommand{\pos}{\in(0,\infty)}
\newcommand{\ra}{\rightarrow}
\newcommand{\Ra}{\Rightarrow}
\newcommand{\sig}[2]{\sum_{#1}^{#2}}
\newcommand{\inte}[2]{\int_{#1}^{#2}}
\newcommand{\atob}[2]{{#1}^{#2}}
\newcommand{\si}{\sigma}
\newcommand{\wh}{\widehat}
\newcommand{\var}{\operatorname{Var}}
\mathchardef\mhyphen="2D
\newcommand{\bp}{\begin{pmatrix}}
\newcommand{\ep}{\end{pmatrix}}
\begin{document}

\title{On drift parameter estimation for reflected fractional Ornstein-Uhlenbeck processes}

\author{Chihoon Lee\footnote{E-mail: chihoon@stat.colostate.edu}\\
Department of Statistics\\
Colorado State University\\
Fort Collins, CO 80523, USA \\ \and Jian Song\footnote{E-mail:  txjsong@hku.hk} \\
Department of Mathematics\\
University of Hong Kong\\ Hong Kong}

\date{}
\maketitle

%\doublespacing

\begin{abstract}
{\color{black} \noi We consider a reflected Ornstein-Uhlenbeck process $X$ driven by a fractional Brownian motion with Hurst parameter $H\in (0, \frac12) \cup (\frac12, 1)$.  Our goal is to estimate an unknown drift parameter $\alpha\in (-\infty,\infty)$ on the basis of continuous observation of the state process.  We establish Girsanov theorem for the process $X$, derive the standard maximum likelihood estimator of the drift parameter $\alpha$, and prove its strong consistency and asymptotic normality.  As an improved estimator, we obtain the explicit formulas for the sequential  maximum likelihood estimator and its mean squared error by assuming the process is observed until a certain information reaches a specified precision level.  The estimator is shown to be unbiased, uniformly normally distributed, and efficient in the mean square error sense.}
\end{abstract}
\noi {\it Keywords: Reflected fractional Ornstein-Uhlenbeck processes, fractional Brownian motion, fractional calculus, parameter estimation, maximum likelihood estimator, sequential maximum likelihood estimator.}

\noi {\it   AMS Subject Classifications:} Primary 60G22; secondary 60H30, 62M09, 90B22

\beginsec
\section{Introduction}
%\beginsec
%\section{}\label{two}
We consider a drift parameter estimation problem for  a one-dimensional reflected fractional Ornstein-Uhlenbeck (RFOU) process with infinitesimal drift $-\alpha x$ and infinitesimal variance $\atob{\sigma}{2}$, where $\alpha\in (-\infty,\infty)$ and $ \sigma>0$. The RFOU process can serve as approximating models in diverse applications such as in physical, biological, and mathematical finance models (see, e.g., \cite{Ricciardibook,Attia91,BoTang2010,BoWangYang2011} and also Section \ref{moti} below). The RFOU behaves as a standard FOU process in the interior of its domain \textcolor{black}{$(0,\infty)$}. However, when it reaches its boundary \textcolor{black}{at zero}, then the sample path returns to the interior in a manner exercising with minimal ``pushing'' force. Our main interest in this model stems from the fact that the RFOU process arises as the key approximating process for queueing systems with reneging or balking customers with long range dependent inter-arrival and/or service time processes  (see \cite{WardGlynn2003, {KonstanLin1996StoNetworks}, WardGlynn2005} and the references therein).  In such cases, the drift parameter $\alpha$ carries the physical meaning of customers' reneging (or, balking) rate from the system.  More details are provided in Section \ref{moti} with regards to how the RFOU model can arise in the applications.  In this paper, we expand the previously known results on the parameter estimation problems for a reflected Ornstein-Uhlenbeck (ROU) process to the case when the noise process is given by a fractional Brownian motion with Hurst parameter $H\in (0, \frac12) \cup (\frac12, 1)$. Such results require several nontrivial technical efforts, since the fractional Brownian motion is not a Markov process nor a semimartingale (unless $H=1/2$) and the classical stochastic calculus is inapplicable in its analysis.  %In this work, we study the statistical inference problems for the RFOU process based on observation of the state process.

We describe the RFOU model more precisely. Let {\color{black}$\Lambda:=(\Omega, \mathcal F, (\mathcal F_t)_{t\geq0}, P)$} be a complete filtered probability space with the filtration $(\mathcal F_t)_{t\geq0}$ satisfying the \emph{usual} conditions.  Define the RFOU process $\{X_t:t\geq0\}$ reflected at \textcolor{black}{zero} on $\Lambda$ as follows.  {\color{black} Let $\{X_t:t\geq0\}$ be the strong solution  to the stochastic differential equation:}  \beq\label{0.1} \left. \begin{gathered} dX_t=-\alpha X_t dt + \sigma dW^H_t + dL_t, \\ X_t\geq \textcolor{black}{0} \quad \mbox{for all}\quad t\geq0,  \\ X_0=x, \end{gathered}  \\ \right\} \eeq where $\alpha\in(-\infty,\infty)$, $\sigma\pos$, \textcolor{black}{$x\in[0,\infty)$} and $W^H=(W^H_t)_{t\geq0}$ is a one-dimensional standard fractional Brownian motion on $\Lambda$ with {a known} Hurst index $H\in (0, \frac12) \cup (\frac12, 1)$. Here, the process $L=(L_t)_{t\geq0}$ is defined to be the minimal, non-decreasing and non-negative process with initial value $L_0=0$, which increases only when $X$ hits the boundary \textcolor{black}{$0$}, so that \beq\label{0.2} \inte{[0,\infty)}{} I(\textcolor{black}{X_t>0})dL_t=0, \eeq where $I(\cdot)$ is the indicator function. \textcolor{black}{That is,  $L_t$ represents the (cumulative) local time of $X$ at the boundary zero up to time $t\geq0$.}  %It can be shown that (see, e.g., \cite{Harrison-book, Whitt2002}) the process $L$ has an explicit expression as \beq\label{sko} L_t = \max\left\{0, \sup_{s\in[0,t]}\left(-x + \alpha \inte{0}{s}X_u du-\sigma W^{H}_s\right) \right\}, \eeq
%where $L_t$ represents the (cumulative) local time of $X$ at the boundary $b$.
%{\color{black} For  the exisitence and uniqueness of the solution to

\textcolor{black}{The existence and uniqueness of a pathwise solution to \eqref{0.1} follows from the standard Picard iteration and uniqueness of the one-dimensional reflection (Skorohod) map. Statistical properties (such as being Markov or semimartingale) of underlying noise processes are immaterial to such results. More generally, as long as the drift function is Lipschitz and the sample paths of noise process are right continuous with left limits, the existence and uniqueness of a pathwise solution are guaranteed (see, for instance, %(Clearly, the fBm belongs to $D([0, \infty), \mathbb R)$ and a linear drift function is Lipschitz.)
\cite{MR3076769, Harrison-book}).}

%It is often the case that the reflecting barrier $b$ is assumed to be zero in applications to queueing system, storage model, engineering, finance, etc. This is mainly due to the physical restriction of the state processes such as queue-length, inventory level, content process, stock prices and interest rates, which take non-negative values.  We refer the reader to \cite{Harrison-book} and \cite{Whitt2002} for more details on reflected stochastic processes and their wide applications.

Our main interest lies in the statistical inference for the RFOU process \eqref{0.1}. More specifically, our aim is to estimate the unknown drift parameter $\alpha\in(-\infty,\infty)$ in \eqref{0.1} based on observation of the state process $\{X_t\}_{t\geq0}$.  We assume that the infinitesimal variance parameter $\sigma^2$ is known as this can be estimated via a standard approach involving the associated quadratic variation process; see Remark \ref{infivar}.

 In \cite{MR1943832}, by applying the approach of fundamental martingale for fractional Brownian motion (see \cite{NVV99}) and computing the Laplace transform of a functional of the state process, the authors investigated an asymptotic behavior of the maximum likelihood estimator (MLE) of the drift parameter for a standard (non-reflected) FOU process when $H \in [1/2, 1)$. They obtained the strong consistency of the estimator and explicit formulas for the asymptotic bias and mean square error. In \cite{MR2644034}, the authors established long time asymptotic properties (such as consistency, asymptotic normality and convergence of the moments) of the MLE for the signal drift parameter in a partially observed fractional diffusion system, via the computations of Laplace transform.  Sharp large deviation properties of the energy and the MLE for the FOU process with $H\in(1/2,1)$ were studied in \cite{MR2859161}. For parameters in stochastic partial differential equations (SPDEs) with additive space-time white noise,  properties of MLE were investigated in \cite{MR1427311, MR1355054}.  When the driving noise process of SPDEs is white in space and fractional in time, the properties of MLE were studied in \cite{MR2531625}. More details on the statistical inference for SPDEs can be found in the survey paper \cite{MR2474113} and the references therein.
 
 The authors of \cite{jspi} studied the MLE for the model (\ref{0.1}) when $\alpha\in(0,\infty)$ (i.e., the ergodic case) and $H=1/2$ (i.e., the standard Brownian motion case), and established several important properties.  The MLE $\wh \alpha_T$  of $\alpha$, based on the process $\{X_t\}$ up to a previously determined fixed time $T$, is given by \beq\label{0.5} \widehat \alpha_T:=\textcolor{black}{\frac{-\inte{0}{T}X_tdX_t}{\inte{0}{T}\atob{X}{2}_tdt}}\,. \eeq  The MLE $\wh \alpha_T$ satisfies strong consistency and asymptotic normality as $T\ra\infty$.  However, this estimator is biased and its mean squared error (MSE) depends on the unknown parameter to be estimated.  We note that exact estimates for the bias and the MSE of the estimator $\wh \alpha_T$ are not available.  As a remedy for this, a sequential estimation plan $(\tau(h),\wh \alpha_{\tau(h)})$ was proposed in \cite{LeeBL2012}. It is assumed in \cite{LeeBL2012} that the parameter ranges the whole real line $\alpha\in(-\infty,\infty)$ (i.e., it covers ergodic, non-ergodic, non-stationary cases) and the process $\{X_t\}$ is observed until the observed Fisher information of the process exceeds a predetermined level of precision $h$ (see also \cite{BoYang-spl2012}). More precisely, $\{X_t\}$ is observed over the random time interval $[0,\tau(h)]$ where the stopping  time $\tau(h)$ is defined as \beq\label{1.1} \tau(h):=\inf\left\{t\geq0: \inte{0}{t} \atob{X}{2}_sds\geq h \right\}, \quad 0<h<\infty, \eeq and the  $\atob{\mathcal F}{X}_{\tau(h)}$-measurable function $\wh \alpha_{\tau(h)}$ defined by \beq\label{1.2}\wh \alpha_{\tau(h)}:=\textcolor{black}{-\frac{1}{h}\inte{0}{\tau(h)}X_sdX_s} \eeq  is a sequential estimator. Then the sequential estimation plan $(\tau(h),\wh \alpha_{\tau(h)})$ has shown to satisfy the following properties (cf. Chapter 17.5 of \cite{LipShir2001v2} or Chapter 5.2 of \cite{Bishwalbook}): (a) it is unbiased; (b) the plan is closed, i.e., the time of the observation $\tau(h)$ is finite with probability 1; (c) its MSE is a constant that does not depend on the parameter to be estimated; (d) not only it provides consistent estimation plan but also $\wh\alpha_{\tau(h)}$ is exactly normally distributed, which makes it possible to construct an exact confidence interval for the parameter $\alpha$.
%\todo{Add more explanations about the result in the current paper?}

Our main results are concerned with extending the aforementioned estimators \eqref{0.5}, \eqref{1.2} and their respective statistical properties to the case when the system is driven by a fractional Brownian motion with \textcolor{black}{$H\in (0, \frac12) \cup (\frac12, 1)$}.    We establish uniform exponential moment estimates of the RFOU process, which, in conjunction with certain integral representations and the fundamental martingales of fractional Brownian motions, leads to two types of fractional Girsanov formulas. Then, we obtain the standard MLE and prove its strong consistency and asymptotic normality. Furthermore, we derive the explicit expression for the sequential  MLE and show that it is unbiased, uniformly normally distributed (over the entire parameter space which is the real line), and efficient in the mean square error sense.

The rest of the paper is organized as follows. In Section \ref{moti}, we provide a brief discussion and derivation about how the RFOU model can naturally arise in the applications. Section \ref{prim} is devoted to preliminaries on fractional Brownian motion and fractional calculus that are necessary in our analysis.  In Section \ref{paraest}, we establish a Girsanov theorem for the RFOU process in Theorem \ref{Girsanov2} and obtain the standard maximum likelihood estimator of the drift parameter, and then prove its strong consistency and asymptotic normality. We also develop an equivalent version of the Girsanov theorem in Theorem \ref{Girsanov1}, directly from Theorem \ref{Girsanov2} by using the transformation result {\color{black} introduced in Section \ref{fbm}}. In Section \ref{seq}, we derive the explicit formulas for the sequential  maximum likelihood estimator and its mean squared error by assuming the process is observed until a certain information reaches a specified precision level.  The sequential estimator is shown to be unbiased, uniformly normally distributed, and efficient in the mean square error sense.  Finally, in the Appendix we provide an independent, more direct proof for Theorem \ref{Girsanov1} and an auxiliary result for fractional Brownian motion.

\setcounter{equation}{0}
\section{Motivation of the RFOU model}\label{moti}
Here we provide some details about how the RFOU model could arise in the applications. Firstly, in the context of financial time series modelling, the RFOU processes can be used to describe the spot foreign exchange rate processes, the domestic interest rate processes, and even some asset price processes in a regulated financial market system (cf. \cite{BoWangYang2011, BoTang2010}) with long range dependence and heavy tails stylized facts, which seem to be common to a wide variety of markets, instruments and periods \cite{Cont_longrange, WillingerTaqquTever}. Secondly, in engineering applications to queueing and storage systems, the RFOU model can play as the key approximating process for systems with reneging or balking customers (cf. \cite{WardGlynn2003, WardGlynn-ROU2003, WardGlynn2005} and the references therein), subject to their inter-arrival and/or service time processes exhibiting long range dependence characteristics in the traffic data. In such cases, the drift parameter  $\alpha$ carries the physical meaning of customers' reneging (or, balking) rate from the system. We provide a more detailed justification below.

Consider a single-server, single-class queueing model under heavy traffic subject to their reneging behaviors.  More precisely, we shall consider a sequence of single-server queueing systems indexed by $N=1, 2, \ldots$, and assume that the sequence of interarrival times $\{T^{(N)}_k - T^{(N)}_{k-1}\}_{k=1}^\infty$ are stationary with $E(T^{(N)}_k-T^{(N)}_{k-1})=1/\lambda^{(N)}$ such that $\lambda^{(N)}\ra\lambda\pos$ as $N\ra\infty$. Also, we assume an invariance principle holds: \[ \frac{T^{(N)}_{[Nt]} - [Nt]/\lambda^{(N)}}{N^H} \Ra \sigma W^H(t), \] where $W^H$ is a standard fractional Brownian motion with Hurst parameter $H$ and $EW^H(t)^2= t^{2H}$ and some scaling constant $\sigma>0$. For the sake of simplicity, we will assume $\lambda=1$.    Then, under a mild assumption on $\var(T^{(N)}_k-T^{(N)}_{k-1})\leq K<\infty$, one gets the following functional central limit theorem for the arrival process $A^{(N)}(t)=\sig{n=1}{\infty}I(T^{(N)}_n\leq t)$, $t\geq0$: \beq\label{fc} \frac{A^{(N)}(Nt) - \lambda^{(N)}Nt}{N^H} \Ra \sigma W^H(t), \eeq where $W^H$ is a standard fractional Brownian motion (cf. Theorem 2.1 of \cite{KonstanLin1996StoNetworks}).

With the arrival process $\{A^{(N)}(t): t\geq0\}$, consider the queueing system operating with a constant service rate $\mu^{(N)}>0$, if there are customers in the buffer, and otherwise the server becomes idle. Moreover, customers faced with long waiting times will abandon the system before receiving service; let $R^{(N)}(t)$ be the total number of customers who abandoned the system during the time interval $[0, t]$. Incorporating these conditions will yield the following equation on the queue length process $(Q^{(N)}(t): t\geq0)$: \beqys Q^{(N)}(t) &=& Q^{(N)}(0)  + A^{(N)}(t) - R^{(N)}(t) - \inte{0}{t} \mu^{(N)} I_{[Q^{(N)}(s)>0]}ds \\ &=& Q^{(N)}(0)  + A^{(N)}(t) - \inte{0}{t} \alpha^{(N)}Q^{(N)}(s)ds - \mu^{(N)}t + \inte{0}{t} \mu^{(N)} I_{[Q^{(N)}(s)=0]}ds,  \eeqys where $\alpha^{(N)}>0$ is a proportionality factor related with the customers' abandonment rate from the system.  We assume that the abandonment factor $\alpha^{(N)}$ is of $O(N^H)$, in particular, $\alpha^{(N)}/N^H\ra\alpha$ as $N\ra\infty$ for some constant $\alpha>0$.

In view of the functional central limit theorem scaling used in \eqref{fc}, we have that \beqys \frac{Q^{(N)}(Nt)}{N^H} &=& \frac{Q^{(N)}(0)}{N^H}  + \frac{A^{(N)}(Nt)}{N^H} - \frac{1}{N^H}\inte{0}{Nt} \alpha^{(N)}Q^{(N)}(s)ds  \\ && -  \frac{\mu^{(N)}Nt}{N^H} + \frac{1}{N^H}\inte{0}{Nt} \mu^{(N)} I_{[Q^{(N)}(s)=0]}ds \\ &=& \frac{Q^{(N)}(0)}{N^H}  + \frac{A^{(N)}(Nt)- \lambda^{(N)}Nt}{N^H} - \frac{1}{N^H}\inte{0}{Nt} \alpha^{(N)}Q^{(N)}(s)ds  \\ && -  \frac{\mu^{(N)}Nt- \lambda^{(N)}Nt}{N^H} + \frac{1}{N^H}\inte{0}{Nt} \mu^{(N)} I_{[Q^{(N)}(s)=0]}ds.  \eeqys Lastly, we impose a ``heavy traffic'' assumption implying that the system processing capacity is balanced with the system load, that is,  the ``drift'' term $(\lambda^{(N)}-\mu^{(N)})$ converges to zero, at a certain rate closely related with the scaling in \eqref{fc}:  $N^{1-H}(\lambda^{(N)}-\mu^{(N)})\ra 0$ as $N\ra\infty$. With the initial condition $\frac{Q^{(N)}(0)}{N^H} \ra x\in [0,\infty)$ and owing to the Lipschitz continuity property of the \emph{generalized} Skorohod (reflection) map \cite{WardGlynn2005, ReedWard2004} together with the continuous- mapping theorem, we finally obtain the weak convergence of the scaled queue length process $\{\frac{Q^{(N)}(Nt)}{N^H}: t\geq0\}_{N=0}^\infty$ to the RFOU process with the reflecting boundary given by zero.  We note that similar derivations are possible for the offered waiting time process, in the context of state-dependent admission control setup with customers' impatient behaviors (see \cite{LeeWeerasinghe2011SPA} and the references therein).

\setcounter{equation}{0}

\section{Preliminaries on  fractional calculus and fractional Brownian motion}\label{prim}
\subsection{Fractional calculus}
In this section, we recall some basic results from  fractional calculus. See \cite{MR1347689} for more details.
Let $a,b\in \mathbb{R}$ with $a<b$ and let $ \alpha >0.$ (The symbol $\alpha$ in this section should not be confused with the parameter of the RFOU process.)  The
left-sided  and right-sided  fractional Riemann-Liouville integrals
of $f\in L^1([a,b])$ of order $\alpha$   are   defined for almost all $t\in(a,b)$ by
\[
I_{a+}^{\alpha }f\left( t\right) =\frac{1}{\Gamma \left( \alpha \right) }%
\int_{a}^{t}\left( t-s\right) ^{\alpha -1}f\left( s\right)
ds\,,
\]%
and
\[
I_{b-}^{\alpha }f\left( t\right) =\frac{\left( -1\right) ^{-\alpha
}}{\Gamma \left( \alpha \right) }\int_{t}^{b}\left( s-t\right)
^{\alpha -1}f\left( s\right) ds,
\]%
respectively, where $\left( -1\right) ^{-\alpha }=e^{-i\pi \alpha }$ and $%
\displaystyle\Gamma \left( \alpha \right) =\int_{0}^{\infty }r^{\alpha
-1}e^{-r}dr$ is the Euler gamma function.

Let $I_{a+}^\alpha(L^p)$ (resp. $I_{b-}^\alpha(L^p)$) be the image of $L^p([a,b])$ by the operator $I_{a+}^\alpha$ (resp. $I_{a+}^\alpha$).

If $f\in I_{a+}^\alpha(L^p)$ (resp. $f\in I_{b-}^\alpha(L^p)$ ) and $\alpha\in(0,1)$, then the left and right-sided  fractional derivatives are defined as
\begin{equation}
D_{a+}^{\alpha }f\left( t\right) =\frac{1}{\Gamma \left( 1-\alpha \right) }%
\left( \frac{f\left( t\right) }{\left( t-a\right) ^{\alpha }}+\alpha
\int_{a}^{t}\frac{f\left( t\right) -f\left( s\right) }{\left(
t-s\right) ^{\alpha +1}}ds\right)  \label{e.2.1}
\end{equation}%
and
\begin{equation}
D_{b-}^{\alpha }f\left( t\right) =\frac{\left( -1\right) ^{\alpha
}}{\Gamma \left( 1-\alpha \right) }\left( \frac{f\left( t\right)
}{\left( b-t\right)
^{\alpha }}+\alpha \int_{t}^{b}\frac{f\left( t\right) -f\left( s\right) }{%
\left( s-t\right) ^{\alpha +1}}ds\right)  \label{e.2.2}
\end{equation}%
for almost all $t\in(a,b)$.

Let $C^\alpha([a,b])$ denote the space of $\alpha$-H\"older continuous functions of order $\alpha$ on the interval $[a,b]$. When $\alpha p>1$, then we have $I_{a+}^\alpha(L^p)\in C^{\alpha-\frac1p}([a,b])$. On the other hand, if $\beta>\alpha$, then $C^\beta([a,b])\subset I_{a+}^\alpha(L^p)$ for all $p>1$.

The following inversion formulas hold:
\begin{align*}
&I_{a+}^\alpha(I_{a+}^\beta f)=I_{a+}^{\alpha+\beta} f, & f\in L^1;\\
&D_{a+}^\alpha(I_{a+}^\alpha f)=f, & f\in L^1;\\
&I_{a+}^\alpha(D_{a+}^\alpha f)=f, & f\in I_{a+}^\alpha(L^1);\\
&D_{a+}^\alpha(D_{a+}^\beta f)=D_{a+}^{\alpha+\beta} f, & f\in I_{a+}^{\alpha+\beta}(L^1) , \alpha+\beta\le1.\\
\end{align*}
Similar inversion formulas hold for the operators $I_{b-}^\alpha$ and $D_{b-}^\alpha$ as well.
%It is clear that when $f\in C^\beta  (a, b)$ with $\beta >\alpha $,  then
%both   $D_{a+}^{\alpha }f\left( t\right) $ and $D_{b-}^{\alpha
%}f\left( t\right)$   exist  and we have
%\begin{equation}
%\begin{cases}
%\left|D_{a+}^{\alpha } f\left( t\right) \right| \le C\|f\|_\beta
%|t-a|^{  \beta-\alpha}
%&\qquad \hbox{if} \  \   f(a)=0 \\  \\
%\left|D_{b-}^{\alpha }f\left( t\right) \right| \le C\|f\|_\beta
%|b-t|^{  \beta-\alpha}
%&\qquad \hbox{if} \ \   f(b)=0  \,.  \\
%\end{cases}\label{e.2.3}
%\end{equation}

We also have the following integration by parts formula.
\begin{proposition}
If $f\in I_{a+}^\alpha(L^p),g\in I_{b-}^\alpha(L^q)$ and $\frac1p+\frac1q=1$, we have
\begin{equation}\label{ibp}
\int_a^b (D_{a+}^\alpha f)(s)g(s)ds=\int_a^b f(s)(D_{b-}^\alpha g)(s)ds.
\end{equation}
\end{proposition}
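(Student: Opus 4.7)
The plan is to reduce the identity to a Fubini-type calculation using the inversion relations listed immediately above the statement. Because $f\in I_{a+}^{\alpha}(L^p)$, there is a unique $\phi\in L^p([a,b])$ with $f=I_{a+}^{\alpha}\phi$, and the inversion $D_{a+}^{\alpha}I_{a+}^{\alpha}=\mathrm{id}$ on $L^1$ identifies $\phi=D_{a+}^{\alpha}f$. In the same way I would write $g=I_{b-}^{\alpha}\psi$ with $\psi=D_{b-}^{\alpha}g\in L^q([a,b])$. With these substitutions every fractional derivative disappears and the claim reduces to the symmetric pairing
\begin{equation*}
\int_a^b \phi(s)\,(I_{b-}^{\alpha}\psi)(s)\,ds \;=\; \int_a^b (I_{a+}^{\alpha}\phi)(s)\,\psi(s)\,ds.
\end{equation*}

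The next step is to substitute the kernel definitions of $I_{a+}^{\alpha}$ and $I_{b-}^{\alpha}$, so that each side becomes a double integral of $\phi(u)\psi(s)$ weighted by $(s-u)^{\alpha-1}$ times an explicit constant into which the convention factor $(-1)^{-\alpha}$ is absorbed. Writing the right-hand side as
\begin{equation*}
\frac{1}{\Gamma(\alpha)}\int_a^b\!\!\int_a^s (s-u)^{\alpha-1}\phi(u)\psi(s)\,du\,ds,
\end{equation*}
Fubini's theorem turns it into
\begin{equation*}
\frac{1}{\Gamma(\alpha)}\int_a^b \phi(u)\int_u^b (s-u)^{\alpha-1}\psi(s)\,ds\,du,
\end{equation*}
which, once the $(-1)^{-\alpha}$ convention factor in the definition of $I_{b-}^{\alpha}$ is matched on both sides, coincides with the left-hand side term by term.

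The only delicate point is legitimizing the interchange of integrals. Since $\alpha>0$, the kernel $k(u)=u^{\alpha-1}\mathbf{1}_{(0,b-a]}(u)$ lies in $L^1$, so Young's convolution inequality shows that $I_{a+}^{\alpha}|\phi|\in L^p([a,b])$ with $\|I_{a+}^{\alpha}|\phi|\|_p \leq C(\alpha,b-a)\,\|\phi\|_p$. H\"older's inequality with the conjugate exponents $p,q$ then yields
\begin{equation*}
\int_a^b\!\!\int_a^s (s-u)^{\alpha-1}|\phi(u)||\psi(s)|\,du\,ds \;\leq\; \Gamma(\alpha)\,\|I_{a+}^{\alpha}|\phi|\|_p\,\|\psi\|_q \;<\;\infty,
\end{equation*}
so Fubini applies. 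This absolute-integrability verification is the main obstacle; beyond it the argument is essentially bookkeeping, and the branch-dependent factor $(-1)^{-\alpha}=e^{-i\pi\alpha}$ from the right-sided conventions appears identically on both sides and cancels without affecting the stated identity.
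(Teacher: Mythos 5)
The paper offers no proof of this proposition at all --- it is quoted as a known fact from the fractional-calculus reference \cite{MR1347689} --- so there is no in-paper argument to compare against. Your route is the standard one and its core is correct: writing $f=I_{a+}^{\alpha}\phi$ and $g=I_{b-}^{\alpha}\psi$ via the inversion formulas, reducing the claim to the duality of the two Riemann--Liouville integral operators, and legitimizing the Fubini interchange through Young's convolution inequality ($\|I_{a+}^{\alpha}|\phi|\|_p\le C\|\phi\|_p$ since the kernel is in $L^1$) followed by H\"older with the conjugate exponents. That is exactly how the result is proved in the cited reference, and your identification of the Fubini justification as the only analytic content is right.

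The one step you should not wave away is the phase factor, and your resolution of it is wrong as written. After your reduction, $(-1)^{-\alpha}=e^{-i\pi\alpha}$ does \emph{not} ``appear identically on both sides'': it sits only inside the definition of $I_{b-}^{\alpha}$, hence only on the side containing $g$, while the left-sided operators $I_{a+}^{\alpha}$ and $D_{a+}^{\alpha}$ carry no such factor. Carrying the definitions \eqref{e.2.1}--\eqref{e.2.2} through your own Fubini computation literally yields
\[
\int_a^b (D_{a+}^{\alpha}f)(s)\,g(s)\,ds=(-1)^{-\alpha}\int_a^b f(s)\,(D_{b-}^{\alpha}g)(s)\,ds,
\]
so the identity as stated is exact only under the unsigned convention of \cite{MR1347689}, in which the factors $(-1)^{\pm\alpha}$ are absent from $I_{b-}^{\alpha}$ and $D_{b-}^{\alpha}$. (Note that the paper's own Young-integral formula \eqref{e.2.4} \emph{does} retain an explicit $(-1)^{\alpha}$, which is the same bookkeeping done correctly.) The fix costs one sentence --- either declare that the right-sided operators are taken without the phase normalization, or carry $(-1)^{-\alpha}$ explicitly into the stated identity --- but the claim that the factor ``cancels without affecting the stated identity'' is false and should be removed. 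Everything else in your argument stands.
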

The following proposition indicates the relationship between Young's integral and Lebesgue integral.
\begin{proposition}
\label{p1} Suppose that $f\in C^{\lambda }(a,b)$ and $g\in C^{\mu
}(a,b)$ with $\lambda +\mu >1$. Let ${\lambda }>\alpha $ and $\mu
>1-\alpha $. Then the Riemann-Stieltjes integral $\int_{a}^{b}fdg$
exists and it can be
expressed as%
\begin{equation}
\int_{a}^{b}fdg=(-1)^{\alpha }\int_{a}^{b}D_{a+}^{\alpha }f\left(
t\right) D_{b-}^{1-\alpha }g_{b-}\left( t\right) dt\,,
\label{e.2.4}
\end{equation}%
where $g_{b-}\left( t\right) =g\left( t\right) -g\left( b\right) $.
\end{proposition}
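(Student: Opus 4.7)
The plan is to establish the formula in two stages: first verify it for smooth integrands via the fractional integration-by-parts formula of the preceding proposition, then extend to the Hölder-regular setting by an approximation argument in slightly weaker Hölder norms.

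First I would check that the right-hand side of \eqref{e.2.4} is well-defined. Using the assumption $\lambda>\alpha$, I would split the integrand in \eqref{e.2.1} as $f(t)/(t-a)^\alpha$ plus the integral term, bound the latter via $|f(t)-f(s)|\le\|f\|_{C^\lambda}(t-s)^\lambda$, and obtain the pointwise estimate $|D_{a+}^\alpha f(t)|\le C\bigl((t-a)^{-\alpha}+(t-a)^{\lambda-\alpha}\bigr)$. A symmetric estimate gives $|D_{b-}^{1-\alpha}g_{b-}(t)|\le C\bigl((b-t)^{\alpha-1}+(b-t)^{\mu-(1-\alpha)}\bigr)$, where the first term comes from $|g_{b-}(t)|=|g(t)-g(b)|\le\|g\|_{C^\mu}(b-t)^\mu$ and the assumption $\mu>1-\alpha$ is exactly what is needed to make the second term integrable (while $\alpha,1-\alpha\in(0,1)$ handle the endpoint singularities). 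Multiplying these bounds gives a dominating function that is integrable on $[a,b]$.

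Next I would prove the identity for $f,g\in C^1([a,b])$. Here the Young (Riemann-Stieltjes) integral reduces to $\int_a^b f g'\,dt$. The key algebraic step is to rewrite $g_{b-}$ as a right-sided fractional integral: with the convention $(-1)^{-1}/\Gamma(1)=-1$, the definition of $I_{b-}^1$ yields
\[
I_{b-}^{1}g'(t)=-\int_t^b g'(s)\,ds=g(t)-g(b)=g_{b-}(t).
\]
Writing $I_{b-}^1=I_{b-}^{1-\alpha}I_{b-}^\alpha$ via the semigroup identity and applying the inversion $D_{b-}^{1-\alpha}I_{b-}^{1-\alpha}=\mathrm{id}$ (to the element $I_{b-}^\alpha g'$), I obtain $D_{b-}^{1-\alpha}g_{b-}=I_{b-}^\alpha g'$, up to the implicit $(-1)^{\pm\alpha}$ factors carried by the definitions. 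Substituting into the right-hand side of \eqref{e.2.4} and invoking the integration-by-parts proposition with $D_{a+}^\alpha f$ against $I_{b-}^\alpha g'$, followed by $D_{b-}^\alpha I_{b-}^\alpha g'=g'$, reduces the right-hand side to $\int_a^b f g'\,dt$; the outer $(-1)^\alpha$ in \eqref{e.2.4} is precisely what absorbs the $(-1)^{1-\alpha}$ produced by $D_{b-}^{1-\alpha}$ and the $(-1)^{-\alpha}$ produced by $I_{b-}^\alpha$ to give a real, correctly signed identity (which can be cross-checked against the explicit example $f\equiv 1$, $g(t)=t$).

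Finally I would extend from smooth to Hölder-regular $(f,g)$ by approximation. Choose exponents $\lambda',\mu'$ with $\alpha<\lambda'<\lambda$, $1-\alpha<\mu'<\mu$, and $\lambda'+\mu'>1$; extend $f,g$ continuously to a slightly larger interval and mollify to obtain $f_n\to f$ in $C^{\lambda'}$ and $g_n\to g$ in $C^{\mu'}$. Young's theorem gives $\int_a^b f_n\,dg_n\to\int_a^b f\,dg$. Linearity of the fractional operators combined with the pointwise bounds from Step 1 yields
\[
|D_{a+}^\alpha(f_n-f)(t)|\le C\|f_n-f\|_{C^{\lambda'}}\bigl((t-a)^{-\alpha}+(t-a)^{\lambda'-\alpha}\bigr),
\]
and analogously for $D_{b-}^{1-\alpha}(g_n-g)_{b-}$, so the dominated convergence theorem transfers the smooth identity to the limit. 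The main obstacle is this last step: mollification does not preserve Hölder norms of the original exponent, which is exactly why the room $\lambda'<\lambda$, $\mu'<\mu$ is essential; equally delicate is producing a single integrable dominating function uniform in $n$, which requires that the products of the endpoint singularities $(t-a)^{-\alpha}(b-t)^{\alpha-1}$ remain integrable, a condition guaranteed by $\alpha\in(0,1)$.
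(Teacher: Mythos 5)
The paper offers no proof of this proposition to compare against: it is recalled as a known preliminary from the fractional-calculus literature (it is Z\"ahle's representation of the Young integral; see \cite{MR1347689}), so your argument stands or falls on its own. It is essentially the standard proof and it is correct: the a priori bounds $|D_{a+}^{\alpha}f(t)|\le C\bigl((t-a)^{-\alpha}+(t-a)^{\lambda-\alpha}\bigr)$ and the corresponding bound on $D_{b-}^{1-\alpha}g_{b-}$ make the right-hand side of \eqref{e.2.4} absolutely convergent; the $C^{1}$ case follows from $g_{b-}=I_{b-}^{1}g'=I_{b-}^{1-\alpha}\bigl(I_{b-}^{\alpha}g'\bigr)$, the inversion formulas, and the integration-by-parts proposition; and mollifying into the strictly weaker norms $C^{\lambda'}$, $C^{\mu'}$ with $\alpha<\lambda'<\lambda$, $1-\alpha<\mu'<\mu$, $\lambda'+\mu'>1$ is exactly the right way to close the argument, since Young's integral is continuous in those norms and your pointwise bounds give a dominating function independent of $n$. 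Two points deserve tightening. First, your displayed bound $|D_{b-}^{1-\alpha}g_{b-}(t)|\le C\bigl((b-t)^{\alpha-1}+(b-t)^{\mu-(1-\alpha)}\bigr)$ is inconsistent with the sentence explaining it: if you use $|g(t)-g(b)|\le\|g\|_{C^{\mu}}(b-t)^{\mu}$ the first term becomes $(b-t)^{\mu+\alpha-1}$, which is bounded; the cruder exponent $\alpha-1$ still pairs integrably with $(t-a)^{-\alpha}$ via the Beta integral, so nothing breaks, but you should commit to one version. Second, the cancellation of the complex phases $(-1)^{\pm\alpha}=e^{\pm i\pi\alpha}$ hidden in $I_{b-}^{\alpha}$ and $D_{b-}^{\alpha}$ against the prefactor $(-1)^{\alpha}$ in \eqref{e.2.4} is the one place a sign error can creep in; your cross-check on $f\equiv 1$, $g(t)=t$ does settle it, but in a written version you should carry the phases explicitly through the chain $D_{b-}^{1-\alpha}g_{b-}=I_{b-}^{\alpha}g'$, the integration by parts, and $D_{b-}^{\alpha}I_{b-}^{\alpha}g'=g'$, rather than assert that they absorb each other.
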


\subsection{Fractional Brownian motion}\label{fbm}

{\color{black} Let $H\in (0, 1)$ be a constant. A fractional Brownian motion $\{W_t^H, t\ge 0\}$  of Hurst parameter $H$ is a  Gaussian process with zero mean and covariance function}
\[\E(W_t^HW_s^H)=\frac12(t^{2H}+s^{2H}-|t-s|^{2H})=:R_H(t,s).\]
Denote by $\mathcal E$ the set of step functions on $[0,T]$. Let $\mathcal H$ be the Hilbert space defined as the closure of $\mathcal E$ with respect to the scalar product
$\langle I_{[0,t]},I_{[0,s]}\rangle=R(t,s).$ Then the mapping $I_{[0,t]}\to W_t^H$ can be extended to be an isometry $\varphi\to W^H(\varphi)$ between $\mathcal H$ and the Gaussian space generated by $W^H$.

The covariance kernel $R_H(t,s)$ has the following integral representation
\[R_H(t,s)=\int_0^{t\wedge s} K_H(t,r)K_H(s,r)dr,\]
where
\begin{equation}\label{e.K}
K_H(t,s)=b_H\left[(\frac ts)^{H-\frac12}(t-s)^{H-\frac12}-(H-\frac12)s^{\frac12-H}\int_s^t u^{H-\frac32}(u-s)^{H-\frac12}du\right],
\end{equation}
with $b_H=\left(\dfrac{2H\Gamma(\frac32-H)}{\Gamma(H+\frac12)\Gamma(2-2H)}\right)^\frac12,$ where $\displaystyle\Gamma(x)=\int_0^\infty s^{x-1}e^{-s}ds$ for $ x>0$ is the Gamma function.

In particular, when $H\in(\frac12,1), K_H(t,s)$ can be all written as
\[K_H(t,s)=C_Hs^{\frac12-H}\int_s^t(u-s)^{H-\frac32}u^{H-\frac12}du,\]
with $C_H=\left(\dfrac{H(2H-1)}{\beta(2-2H,H-\frac12)}\right)^{\frac12}=b_H(H-\frac12),$ where $\displaystyle\beta(x,y)=\int_0^1t^{1-x}(1-t)^{1-y}dt$ for $ x>0, y>0$ is the Beta function.

Let $K_H$ denote the operator  on  $L^2([0,T])$, which is an isomorphism from $L^2([0,T])$ onto $I_{0+}^{H+\frac12}(L^2([0,T]))$,
\[(K_Hf)(t)=\int_0^t K_H(t,s)f(s)ds.\]
We can rewrite the action of $K_H$ as (see \cite{MR1677455} or \cite{MR1934157})
\begin{equation}\label{e.Koperator}
(K_Hf)(t)=
\begin{cases} C_H\Gamma(H-\frac12)I_{0+}^{1}s^{H-\frac12}I_{0+}^{H-\frac12}(s^{\frac12-H}f)(t),& \text{ if }\,\, H> \frac12\,,\\
b_H\Gamma(H+\frac12)I_{0+}^{2H}s^{\frac12-H}I_{0+}^{\frac12-H}(s^{H-\frac12}f)(t), &\text{ if }\,\, H< \frac12\,.
\end{cases}
\end{equation}
If $f$ is absolutely continuous, for $H<\frac12$, we can write
\[(K_Hf)(t)=b_H\Gamma(H+\frac12)I_{0+}^{1}s^{H-\frac12}D_{0+}^{H-\frac12}(s^{\frac12-H}f)(t).\]
Consider the operator $K_H^*$ from $\mathcal E$ to $L^2([0,T])$ defined as
\[(K_H^*\varphi)(s)=K_H(T,s)\varphi(s)+\int_s^T(\varphi(r)-\varphi(s))\dfrac{\partial K_H}{\partial r}(r,s)dr.\]
Noting that $(K_H^*I_{[0,t]})(s)=K_H(t,s)I_{[0,t]}(s), K_H^*$ can be extended to be an isometry between $\mathcal H$ and $L^2([0,T])$,
\[\langle f, g \rangle_\mathcal H=\langle K_H^*f, K_H^*g\rangle_{L^2([0,T])}, \quad f,g\in \mathcal H.\]

%We note that $K_H^*$ has a representation
%\[(K_H^*\varphi)(s)=C_H\Gamma(H-\frac12)s^{\frac12-H}\left(I_{T-}^{H-\frac12} u^{H-\frac12}\varphi(u)\right)(s).\]
%The following equality will be used in this paper (see \cite[Lemma 2.1.9]{MR2387368} or  \cite[Lemma 1]{MR1849177}),
%\begin{align}\label{keq}
%\int_0^T(K_H^*f)(t)g(t)dt=\int_0^T f(t)(K_Hg)(dt), \quad \text{for} \quad g\in \mathcal H, \,\,\, g\in L^2([0,T]).
%\end{align}
As a consequence, the operator $K_H^*$ provides an isometry between the Hilbert spaces $\mathcal H$ and $L^2([0,T])$. Hence the process
\begin{equation}\label{bm}
W_t=W^H((K_H^*)^{-1}I_{[0,t]}), \quad t\ge0
\end{equation} is a standard Brownian motion, and $W^H$ has the following  integral representation
{\color{black} \begin{equation}\label{eq3.4'}
W_t^H=\int_0^t K_H(t,s)dW_s.
\end{equation}
We say that $W$ is the standard Brownian motion \emph{related with} $W^H$}.

Moreover, we have the relationship between the Wiener integrals with respect to fractional Brownian motion and its related Brownian motion as
\begin{align}\label{eq3.4}
W^H(\varphi):=\int_0^T \varphi(s)dW_s^H=\int_0^T (K_H^*\varphi)(t)dW_t,\quad\varphi\in \mathcal H.
\end{align}
%Moreover, we also have
%\[\delta_H(\varphi):=\int_0^T\varphi(s) dW_s^H=\delta(K_H^*\varphi):=\int_0^T (K_H^*\varphi)(t)dW_t,\quad \varphi\in dom\,\delta_H,\]
%where $\delta$ and $\delta_H$ are the divergence operator or Skorohod integral for $W$ and $W^H$, respectively.
{\color{black}  We refer to \cite[Chapter 2]{MR2387368} and \cite[Section 5.1]{MR2200233} for more details.}

%\setcounter{equation}{0}
%\section{Transformation between fractional Brownian motion and Brownian motion based on fundamental martingale}\label{tran}
{\color{black} The expressions in (\ref{bm}) and (\ref{eq3.4'}) provide the transformations between fractional Brownian motion and Brownian motion. In this section, we shall establish another useful transformation using the fundamental martingale $M^H$ defined in (\ref{e4.0}) below.}

{\color{black} For $H\in (0, \frac12) \cup (\frac12, 1)$, define
\begin{align*}
k_H(t,s)=\kappa_H^{-1}s^{\frac12-H}(t-s)^{\frac12-H},\, 0<s<t,
\end{align*}
where $\kappa_H=2H\Gamma(\frac32-H)\Gamma(H+\frac12).$ Define
\begin{equation}\label{e4.0}
M_t^H=\int_0^t k_H(t,s)dW_s^H.
\end{equation}}

 Then from (\ref{eq3.4}), one gets that  $M^H$ is a Gaussian martingale with  quadratic variation $\langle M^H \rangle_t=\lambda_H^{-1}t^{2-2H},$ where $\lambda_H=\frac{2H\Gamma(3-2H)\Gamma(H+\frac12)}{\Gamma(\frac32-H)}.$ The martingale $M^H$ was introduced as the \emph{fundamental martingale} for the fractional Brownian motion $W^H$ in order to get a Girsanov type theorem for $W^H$ in \cite{NVV99} .

By L\'evy Characterization Theorem, it is easy to verify that the process
\begin{equation}\label{ee4.1}
B_t=\frac{H(2H-1)}{C_H} \int_0^t s^{H-\frac12}dM_s^H, \quad t\geq0
\end{equation}
is a standard Brownian motion {\color{black} for $H\in (0, \frac12) \cup (\frac12, 1)$}.

The formula (\ref{eq3.4'}) provides an integral representation for the fractional Brownian motion $W^H$ in terms of the standard Brownian motion $W$ given in (\ref{bm}). On the other hand, Theorem \ref{connection} below adopted from \cite[Theorem 5.2]{NVV99} claims that the Brownian motion $W$ coincides with $B$ given in (\ref{ee4.1}) pathwisely.  
%A closely related result is in \cite[Chapter 5]{MR2130224}, where the relationship between the white noise $\dot W$ and  the fractional white noise $\dot W^H$ was investigated.
\begin{thm}\label{connection}
Let the process $B=\{B_t\}_{t\geq0}$ be defined as in \eqref{ee4.1}. Then $B$ is the standard Brownian motion \emph{related} with $W^H$ {\color{black} for $H\in (0, \frac12) \cup (\frac12, 1)$}, i.e.,
\[W_t^H=\int_0^t K_H(t,s)dB_s,\]
{\color{black} and hence $B_t= W_t$ for $t\ge 0$ a.s. where $W$ is given by (\ref{bm})}.
\end{thm}

\setcounter{equation}{0}
\section{Parameter estimation for the RFOU}\label{paraest}
{\color{black} In this section, we shall obtain exponential moment estimates for the RFOU process $X$ (Section \ref{esti}), then derive Girsanov theorem for $X$  and use it to derive the MLE of the drift parameter (Section \ref{MLE}), and finally establish the strong consistency and the asymptotic normality of the MLE (Section \ref{Prop}).}

\subsection{Estimates on the RFOU process}\label{esti}
For any $\beta  \in (0,1)$, we denote by $C^{\beta  }(0,T)$ the space of $%
\beta $-H\"{o}lder continuous functions on the interval $[0,T]$. For  $x\in C^{\beta  }(0,T)$, we
will
make use of the notations
\begin{equation}\label{eq5.1}
\left\| x\right\| _{\beta,T }=\sup_{0\leq \theta <r\leq T}\frac{%
|x_{r}-x_{\theta }|}{|r-\theta |^{\beta }},
\end{equation}
and%
\begin{equation}\label{eq5.2}
\Vert x||_{\infty,T }=\sup_{0\leq r\leq T}|x_{r}|\,.
\end{equation}

\begin{lem}\label{fernique}
Let $X_t$  be the strong solution to the following Skorohod equation
\beq\label{2190}dX_t=-\alpha X_tdt+\sigma dW^H_t+ dL_t\eeq subject to $X(0)=x\ge0$, where $L$ is the minimal non-decreasing process.
Then $X$ is $(H-)$ H\"{o}lder continuous.
% Let
%\[\|X\|_{H-\epsilon,T}=\sup_{0\le s<r \le T}\frac{|X_r-X_s|}{(r-s)^{H-\epsilon}}.\]
There exists $\lambda_0>0$ depending on $T,H$ and $\epsilon$ such that
\beq\label{fe}\mathbb E \exp(\lambda_0\|X\|^2_{\infty,T})<\infty \quad \text{and} \quad \mathbb E \exp(\lambda_0 \|X\|_{H-\epsilon,T}^2)<\infty.\eeq
\end{lem}

\begin{proof}

%Note that from \eqref{sko}, we have

\textcolor{black}{From the explicit representation of the local time $L$ of the one-dimensional Skorohod reflection problem (cf. \cite{Harrison-book, Whitt2002}), we have}
\[X_t=x-\alpha \int_0^t X_sds+\sigma W_t^H+\sup_{0\le s\le t}\{-x+\alpha\int_0^s X_udu-\sigma W_s^H\}\vee 0\]
\textcolor{black}{for each pathwise solution $X$ to \eqref{2190}.}

Assume $\alpha>0$. We have that
\beqys
X_t&\le& x-\alpha \int_0^t X_sds+\sigma W_t^H+\alpha\int_0^t X_sds+\sup_{0\le s\le t}\{-x-\sigma W_s^H\}\vee 0\\
&\le& 2x+2\sigma\|W^H\|_{\infty,T}.
\eeqys
For general $\alpha\in\mathbb R$, we have
\[
|X_t|\le 2x+2|\alpha| \int_0^t |X_s|ds+2\sigma \|W^H\|_{\infty,T}.
\]
By Gronwall's inequality, we have
\[\|X\|_{\infty,T}\le 2(x+\sigma\|W^H\|_{\infty,T})e^{2|\alpha| T}\]
Then, the first inequality in \eqref{fe} follows from the Fernique's Theorem, which claims the exponential integrability of the square of a seminorm for a Gaussian process (see \cite{MR0413238}).

For the second inequality, noting that  the function $f(x)=x\vee0:=\max\{x,0\}$ is Lipschitz, and that for a general continuous function $g$,
\beqys
\sup\limits_{0\le s\le t+r}\{g_s\}\vee0-\sup\limits_{0\le s\le t}\{g_s\}\vee0
&\le& \left(\sup_{t\le s\le t+r}\{g_s\}-\sup_{0\le s\le t}\{g_s\}\right)\vee0\\
&\le& \sup_{t\le s\le t+r}\{g_s\}-g_t\\
&\le &  \sup_{t\le s,u\le t+r}\{g_s-g_u\},
\eeqys
we have
\[|X_r-X_s|\le 2|\alpha|\|X\|_\infty(r-s)+2\sigma \|W\|_{H-\epsilon,T} (r-s)^{H-\epsilon}. \]
So we get that
\[\|X\|_{H-\epsilon,T}\le 2|\alpha|\|X\|_{\infty,T} T^{1+\epsilon-H}+2\sigma \|W\|_{H-\epsilon,T}.\]
Applying Fernique's Theorem once more, we have $\E e^{\lambda \|W\|_{H-\epsilon,T}^2}<\infty$ for some $\lambda>0$. Combining with the first inequality for $\|X\|_{\infty,T}$, we have the second inequality.
\end{proof}

%\setcounter{equation}{0}
%\section{Parameter estimation for the RFOU}\label{paraest}

\subsection{MLE based on the Girsanov transform}\label{MLE}
{\color{black} Let $0<s<t\le T$ and $H\in (0, \frac12) \cup (\frac12, 1)$. Recall} (from Section \ref{fbm}) the following notations
\begin{align*}
&k_H(t,s)=\kappa_H^{-1}s^{\frac12-H}(t-s)^{\frac12-H},\quad\quad \kappa_H=2H\Gamma(\frac32-H)\Gamma(H+\frac12);\\
&M_t^H=\int_0^t k_H(t,s)dW_s^H.
\end{align*}
The quadratic variation of the martingale $M^H$ is $\langle M^H \rangle_t=\lambda_H^{-1}t^{2-2H},$ where $\lambda_H=\dfrac{2H\Gamma(3-2H)\Gamma(H+\frac12)}{\Gamma(\frac32-H)}.$

{\color{black}The following Girsanov theorem for fractional Brownian motion was given in \cite{NVV99} based on the fundamental martingale $M^H$.
\begin{thm}[Girsanov theorem for shifted fractional Brownian motion]
For $a\in \mathbb R$ and $H\in (0, \frac12) \cup (\frac12, 1)$, $X_t=W^H_t +at, t\ge 0$ is a fractional Brownian motion with Hurst parameter $H$ under $P_a$ where $P_a$ is given by
\[\frac{d P_a}{dP}=\exp\left(-aM_t^H-\frac12 a^2\langle M^H\rangle_t\right).\]
\end{thm}}

Now we develop Girsanov theorem for our RFOU process $X$ by using the fundamental martingale $M^H$. Let \beq\label{de}\tilde X_t=\int_0^t k_H(t,s)dX_s, \quad \tilde L_t=\int_0^tk_H(t,s)dL_s, \quad \mbox{and} \quad \chi_t=\frac{d}{d\langle M^H \rangle_t} \int_0^t k_H(t,s)X_sds.\eeq Then we have
\beq\label{de1}d\tilde X_t=-\alpha \chi_t d\langle M^H \rangle_t+\sigma dM_t^H+d\tilde L_t.\eeq

\begin{thm}[Girsanov theorem for RFOU] \label{Girsanov2}
 For $H\in (0, \frac12) \cup (\frac12, 1)$, let
\[\eta_T=\exp\left(\frac\alpha\sigma \int_0^T\chi_sdM_s^H-\frac{\alpha^2}{2\sigma^2}\int_0^T\chi_s^2d\langle M^H \rangle_s\right).\]
Then $ \{\sigma^{-1}X_s:0\le s\le T\}$ is a reflected  fractional Brownian motion under the new probability $\tilde P$ defined by $d\tilde P/dP=\eta_T$.
\end{thm}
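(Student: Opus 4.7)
The plan is to verify $\mathbb{E}[\eta_T] = 1$ so that $\tilde P$ is a genuine probability measure, apply the classical Girsanov theorem for continuous martingales to the fundamental martingale $M^H$, and then invert the Molchan-type transformation of Theorem \ref{connection} to recover a fractional Brownian motion under $\tilde P$; combining the result with \eqref{de1} will yield the desired reflected fBm structure for $X$.

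The crux of the argument is verifying the exponential-martingale property $\mathbb{E}[\eta_T]=1$. By Lemma \ref{quadvar},
\[
\int_0^T \chi_s^2\, d\langle M^H\rangle_s \le C\bigl(T^{2-2H}\|X\|_{\infty,T}^2 + T^{2-\varepsilon}\|X\|_{H-\varepsilon,T}^2\bigr),
\]
and Lemma \ref{fernique} supplies a threshold $\lambda_0 > 0$ for which $\mathbb{E}[\exp(\lambda_0\|X\|_{\infty,T}^2)]$ and $\mathbb{E}[\exp(\lambda_0\|X\|_{H-\varepsilon,T}^2)]$ are both finite. Hence for $T$ small enough that $\frac{\alpha^2}{2\sigma^2}CT^{2-2H}$ and $\frac{\alpha^2}{2\sigma^2}CT^{2-\varepsilon}$ both fall below $\lambda_0$, Novikov's criterion applies directly. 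For general $T$, partition $[0,T]$ into subintervals of length $\Delta t$ small enough that the local Novikov condition holds on each piece, and then telescope the conditional expectations using the positive-supermartingale argument already detailed in the proof of Theorem \ref{Girsanov1}, to conclude $\mathbb{E}[\eta_T]=1$.

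With $\mathbb{E}[\eta_T]=1$ in hand, the classical Girsanov theorem for continuous local martingales gives that $\tilde M_t^H := M_t^H - \frac{\alpha}{\sigma}\int_0^t \chi_s\, d\langle M^H\rangle_s$ is, under $\tilde P$, a continuous martingale with the unchanged quadratic variation $\langle \tilde M^H\rangle_t = \lambda_H^{-1}t^{2-2H}$; since this variation is deterministic, $\tilde M^H$ is Gaussian with the same finite-dimensional distributions as $M^H$ had under $P$. Inserting this Girsanov shift into \eqref{de1} yields $d\tilde X_t = \sigma\, d\tilde M_t^H + d\tilde L_t$ under $\tilde P$. Applying the explicit inverse transformation of Theorem \ref{connection} to $\tilde M^H$ then produces a process $\tilde W^H$ which, by the same linear-transformation reasoning as in Theorem \ref{connection}, is a standard fractional Brownian motion under $\tilde P$ and, by direct computation against the identities that took $W^H$ to $M^H$, coincides with $W_t^H - \frac{\alpha}{\sigma}\int_0^t X_s\, ds$. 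Substituting back into the RFOU equation \eqref{0.1} gives $dX_t = \sigma\, d\tilde W_t^H + dL_t$ under $\tilde P$; because the reflection constraint \eqref{0.2} together with the non-decrease and nonnegativity of $L$ are pathwise properties, they are preserved under the equivalent change of measure, so $\{X_t:0\le t\le T\}$ is a reflected fractional Brownian motion under $\tilde P$. The sole technical obstacle is the partitioning step of the Novikov verification, where the prefactors $T^{2-2H}$ and $T^{2-\varepsilon}$ degrade as $T$ grows and force the piecewise telescoping argument rather than a one-shot application.
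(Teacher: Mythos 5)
Your proposal is correct and follows essentially the same route as the paper: reduce everything to $\E(\eta_T)=1$, obtain this for small $T$ from Lemma \ref{quadvar} combined with the exponential moment bounds of Lemma \ref{fernique} via Novikov's criterion, and handle general $T$ by partitioning $[0,T]$ and telescoping with the supermartingale argument from the proof of Theorem \ref{Girsanov1}. Your additional spelling-out of how $\E(\eta_T)=1$ yields the reflected-fBm conclusion (Girsanov shift of the fundamental martingale and inversion of the transformation from Theorem \ref{connection}) is a legitimate elaboration of steps the paper leaves implicit.
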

\begin{proof}{\color{black}
It suffices to prove that $\E(\eta_T)=1.$ Indeed, if $\E(\eta_T)=1$, then by (\ref{ee4.1}) and classical Girsanov theorem for standard Brownian motion (see, e.g., \cite{MR1800857}), we have that $\{\sigma M_t^H, 0\le t\le T\}$ under the original probability $P$ has the same distribution as $\{-\alpha\int_0^t \chi_s d\langle M^H\rangle_s +\sigma M_t^H, 0\le t\le T\}$ under the new probability $\tilde P$. Therefore $ \{\sigma^{-1}X_t:0\le t\le T\}$ is a reflected  fractional Brownian motion under $\tilde P$.

Now we show that $\E(\eta_T)=1.$ By Lemma \ref{quadvar} below and Lemma \ref{fernique}, there exists a positive constant $\delta$ such that for
\begin{equation}\label{eq5.6'}
\E \exp\left(\frac{\alpha^2}{2\sigma^2}\int_t^{t+\delta} \chi_s^2 d\langle M^H\rangle_s\right)<\infty
\end{equation}
 for $0\le t<t+\delta\le T$.

 Choose $n$ big enough such that $\dfrac Tn\le \delta $, and let $t_i=\dfrac{iT}{n}, i=1,\cdots, n.$
Denoting  $\frac{\alpha}{\sigma} \chi_s$ by $f(s)$, we have
\beqys
\E[\eta_T]&=&\E\exp\left(\int_0^Tf(s)dM^H_s-\frac12\int_0^Tf^2(s)d\langle M^H\rangle_s\right)\\
&=&\E\left(\E\left[\left.\exp\left(\int_0^Tf(s)dM^H_s-\frac12\int_0^Tf^2(s)d\langle M^H\rangle_s\right)\right|\mathcal F_{t_{n-1}}\right]\right)\\
&=&\E\bigg(\exp\left(\int_0^{t_{n-1}}f(s)dM^H_s-\frac12\int_0^{t_{n-1}}f^2(s)d\langle M^H\rangle_s\right)\\
&&\quad \times\E\left[\left.\exp\left(\int_{t_{n-1}}^Tf(s)dM^H_s-\frac12\int_{t_{n-1}}^Tf^2(s)d\langle M^H\rangle_s\right)\right|\mathcal F_{t_{n-1}}\right]\bigg).
\eeqys
We have that $\displaystyle\E\left[\left.\exp\left(\int_{t_{n-1}}^Tf(s)dM^H_s-\frac12\int_{t_{n-1}}^Tf^2(s)d\langle M^H\rangle_s\right)\right|\mathcal F_{t_{n-1}}\right]\le 1$ a.s., since\\ $\displaystyle\exp\left(\int_{0}^\cdot f(s)dM^H_s-\frac12\int_{0}^\cdot f^2(s)d\langle M^H\rangle_s\right)$ is a positive local martingale and hence a supermartingale. On the other hand, the Novikov's condition $ \displaystyle \E\left[\exp\left(\frac12\int_{t_{n-1}}^Tf^2(s)d\langle M^H\rangle_s\right)\right]<\infty$  is fulfilled by (\ref{eq5.6'}), and hence $\displaystyle\E\left[\exp\left(\int_{t_{n-1}}^T f(s)dM^H_s-\frac12\int_{t_{n-1}}^Tf^2(s)d\langle M^H \rangle_s\right)\right]= 1$. Therefore \[\E\left[\left.\exp\left(\int_{t_{n-1}}^T f(s)dM^H_s-\frac12\int_{t_{n-1}}^Tf^2(s)d\langle M^H\rangle_s\right)\right|\mathcal F_{t_{n-1}}\right]= 1 \quad \mbox{a.s.}\]
We may repeat the above procedure for $\displaystyle \E\exp\left(\int_0^{t_{i-1}}f(s)dM^H_s-\frac12\int_0^{t_{i-1}}f^2(s)d\langle M^H\rangle_s\right), i=n-1, n-2, \cdots, 2, 1$, and finally we obtain        \[\E(\eta_T)=\E\exp\left(\int_0^{t_{1}}f(s)dM^H_s-\frac12\int_0^{t_{1}}f^2(s)d\langle M^H\rangle_s\right)=1. \]}
\end{proof}

\begin{lem}\label{quadvar}
There exists a constant $C>0$ depending on $H, T$ and $\varepsilon\in(0,H\wedge \frac12)$ such that for $0\le a<b\le T$
\begin{equation}
\int_a^b \chi_s^2 d\langle M^H \rangle_s\le
\begin{cases} C \left[ \|X\|_{\infty,T}^2(b-a)^{2-2H}+\|X\|_{H-\varepsilon,T}^2 (b-a)\right], & H\in (\frac12 ,1);\\
C \|X\|_{\infty,T}^2 (b-a), & H\in (0,\frac12).
\end{cases}
\end{equation}
\end{lem}
\begin{proof}
We begin by noting that \[\chi_t=\kappa_H\frac{d}{dt}\left(\int_0^ts^{\frac12-H}(t-s)^{\frac12-H}X_sds\right)\frac{dt}{d\langle M^H \rangle_t}.\]
%In the proof, we shall use $C$ to denote a generic positive constant, which may vary from line to line.

Let $\displaystyle Y_t=\int_0^t s^{\frac12-H}X_sds.$

{\bf Case 1:  $H>\dfrac12$.} We have that
\beqy
&&\int_0^t (t-s)^{\frac12-H}s^{\frac12-H}X_sds\notag\\
&=&\int_0^t (t-s)^{\frac12-H}d(Y_s-Y_t)\notag\\
&=&\lim_{s\uparrow t} (t-s)^{\frac12-H}(Y_s-Y_t)+t^{\frac12-H}Y_t+(H-\frac12)\int_0^t(Y_t-Y_s)(t-s)^{-\frac12-H}ds\notag\\
&=&t^{\frac12-H}Y_t+(H-\frac12)\int_0^t(Y_t-Y_s)(t-s)^{-\frac12-H}ds\notag\\
&=&t^{\frac12-H}Y_t+(H-\frac12)\int_0^t(Y_t-Y_{t-s})s^{-\frac12-H}ds,\label{e5.81}
\eeqy
{\color{black} where in the third equality we used the fact \[ 0\le \lim_{s\uparrow t} (t-s)^{\frac12-H} (Y_t-Y_s) = \lim_{s\uparrow t} (t-s)^{\frac12-H} \int_s^t r^{\frac12-H}X_rdr\le (\frac32-H)^{-1}\lim_{s\uparrow t} (t-s)^{\frac12-H} (t^{\frac32 -H}-s^{\frac32-H})\|X\|_{\infty,T}=0.\]}
Also, observe that
\beqy
&&\frac{d}{dt}\int_0^t(Y_t-Y_{t-s})s^{-\frac12-H}ds\notag\\
&=&\lim_{\varepsilon\to0}\frac{1}{\varepsilon}\left(\int_0^{t+\varepsilon}(Y_{t+\varepsilon}-Y_{t+\varepsilon-s})s^{-\frac12-H}ds-\int_0^t(Y_t-Y_{t-s})s^{-\frac12-H}ds\right)\notag\\
&=:&A(t)+B(t), \label{e5.82}
\eeqy
where $$A(t)=\lim_{\varepsilon\to0}\frac{1}{\varepsilon}\left(\int_0^{t+\varepsilon}(Y_{t+\varepsilon}-Y_{t+\varepsilon-s})s^{-\frac12-H}ds-\int_0^t(Y_{t+\varepsilon}-Y_{t+\varepsilon-s})s^{-\frac12-H}ds\right),
$$
and $$B(t)=\lim_{\varepsilon\to0}\frac{1}{\varepsilon}\left(\int_0^{t}(Y_{t+\varepsilon}-Y_{t+\varepsilon-s})s^{-\frac12-H}ds-\int_0^t(Y_t-Y_{t-s})s^{-\frac12-H}ds\right).
$$
It is clear that
\beq
A(t)=t^{-\frac12-H}Y_t, \label{e5.83}
\eeq when $ t>0$ since $Y_t$ is continuous in $t$.

For $B(t)$, we have
\beqys
B(t)&=&\int_0^t(Y_t'-Y_{t-s}')s^{-\frac12-H}ds\\
&=&\int_0^t\left(t^{\frac12-H}X_t-(t-s)^{\frac12-H}X_{t-s}\right)s^{-\frac12-H}ds\\
&=&t^{\frac12-H}\int_0^t\left(X_t-X_{t-s}\right)s^{-\frac12-H}ds+\int_0^tX_{t-s}\left(t^{\frac12-H}-(t-s)^{\frac12-H}\right)s^{-\frac12-H}ds.
\eeqys
Observe that \[ 0\le \int_0^t\left(X_t-X_{t-s}\right)s^{-\frac12-H}ds\le \int_0^t\|X\|_{H-\varepsilon,T}s^{H-\varepsilon}s^{-\frac12-H}ds=\dfrac{2\|X\|_{H-\varepsilon,T}}{1-2\varepsilon}t^{\frac12-\varepsilon},\]
 and also that
  \[\int_0^t\left|t^{\frac12-H}-(t-s)^{\frac12-H}\right|s^{-\frac12-H}ds=t^{1-2H}\int_0^1\left| 1-(1-s)^{\frac12-H}\right|s^{-\frac12-H}ds.\]
 {\color{black} Note that  $\displaystyle \int_0^1\left| 1-(1-s)^{\frac12-H}\right|s^{-\frac12-H}ds<\infty$, since the integrand function is continuous on $(0,1)$, while bounded by $2(1-s)^{\frac12-H}$ when $s$ is close to $1$, and by $ (1-s)^{-\frac12-H}s^{\frac12-H}$ when $s$ is close to $0$.} Therefore
 \beqy \label{e5.84}
 |B(t)|\le C\left(\|X\|_{H-\varepsilon,T}t^{1-H-\varepsilon}+\|X\|_{\infty,T}t^{1-2H}\right).
 \eeqy

Combining equations (\ref{e5.81}), (\ref{e5.82}), (\ref{e5.83}) and (\ref{e5.84}), we have
\beqys
\left|\frac{d}{dt}\int_0^t (t-s)^{\frac12-H}s^{\frac12-H}X_sds\right|&\le &C\left|t^{\frac12-H}t^{\frac12-H}X_t+t^{-\frac12-H}Y_t+B(t)\right|\\
&\le& C\left(\|X\|_{\infty,T}t^{1-2H}+\|X\|_{H-\varepsilon,T}t^{1-H-\varepsilon}\right).
\eeqys

Noting that $\dfrac{dt}{d\langle M^H\rangle_t}=\dfrac{\lambda_H}{2-2H}t^{2H-1}$, for $\chi_t$ we have
\[|\chi_t|\le C\left(\|X\|_{\infty,T}+\|X\|_{H-\varepsilon,T}t^{H-\varepsilon}\right).\]
Hence, we conclude that \[ \int_a^b \chi_s^2d\langle M^H \rangle_s=\lambda_H^{-1}(2-2H)\int_a^b\chi_s^2s^{1-2H}ds\le C\left(\|X\|_{\infty,T}^2(b-a)^{2-2H}+\|X\|_{H-\varepsilon,T}^2 T^{1-2\varepsilon}(b-a)\right).\]
{\bf Case 2: $H<\dfrac12$.}  Notice that
\beqys
\dfrac{d}{dt}\int_0^t (t-s)^{\frac12-H} s^{\frac12-H}X_sds
&=&\Gamma(\dfrac32-H)\dfrac{d}{dt} \left(I_{0+}^{\frac32-H}(s^{\frac12-H}X_s)(t)\right)\\
&=&\Gamma(\dfrac32-H)I_{0+}^{\frac12-H}(s^{\frac12-H}X_s)(t),
\eeqys
and also
\[\left|I_{0+}^{\frac12-H}(s^{\frac12-H}X_s)(t)\right|=\dfrac{1}{\Gamma(\frac12-H)}\left|\int_0^t (t-s)^{-\frac12-H}s^{\frac12-H}X_sds\right|\le C\|X\|_{\infty,T} t^{1-2H}.\]
Therefore, $|\chi_t|\le C\|X\|_{\infty,T}.$ Hence, we get
 \[ \int_a^b\chi_s^2d\langle M^H \rangle_s=\lambda_H^{-1}(2-2H)\int_a^b \chi_s^2s^{1-2H}ds\le C\|X\|_{\infty,T}^2T^{1-2H} (b-a).\]
\end{proof}

\begin{rem}\label{link}
From the proof  of the above lemma, we have that
\begin{itemize}
\item if {\color{black} $H>\dfrac12$},
\beqys \frac{d}{dt}\int_0^t (t-s)^{\frac12-H}s^{\frac12-H}X_sds
&=& t^{\frac12-H}Y_t+(H-\frac12)(A(t)+B(t))\\
&=& t^{1-2H}X_t+(H-\frac12)B(t)\\
&=& t^{1-2H}X_t+(H-\frac12)\int_0^t\left(t^{\frac12-H}X_t-(t-s)^{\frac12-H}X_{t-s}\right)s^{-\frac12-H}ds\\
&=&\Gamma(\frac32-H)D_{0+}^{H-\frac12}(u^{\frac12-H}X_u)(t), \eeqys and
%\item[and] %{\color{black} Delete this sentence.   if $H=\dfrac12$, we have $\chi_t=X_t$ directly.}

\item if {\color{black}$H<\dfrac12$},
\beqs
\dfrac{d}{dt}\int_0^t (t-s)^{\frac12-H} s^{\frac12-H}X_sds = \Gamma(\dfrac32-H)I_{0+}^{\frac12-H}(u^{\frac12-H}X_u)(t).
\eeqs
\end{itemize}
If we define $D_{0+}^\alpha$ as $I_{0+}^{-\alpha}$ for $\alpha<0,$ then for all $H\in (0, \frac12) \cup (\frac12, 1)$, $\chi_t$ can be represented as
\begin{equation}\label{e.chi}
\chi_t=\frac{\Gamma(\frac32-H)\lambda_H}{2-2H}\kappa_H^{-1} t^{2H-1}D_{0+}^{H-\frac12}(u^{\frac12-H}X_u)(t).
\end{equation}
\end{rem}
{\color{black} \begin{rem}\label{r5.6}
By equation (\ref{e.Koperator}), we have
\begin{equation*}
(K_H^{-1}\varphi )(t)=
\begin{cases} \dfrac{1}{C_H\Gamma(H-\frac12)} t^{H-\frac12}D_{0+}^{H-\frac12}(u^{\frac12-H}\varphi')(t), &\text{ if }  H>\frac12\\
\dfrac{1}{b_H\Gamma(H+\frac12)} t^{H-\frac12}I_{0+}^{\frac12-H}(u^{\frac12-H}\varphi')(t), &\text{ if } H<\frac12.
\end{cases}
\end{equation*}
Noting that $ C_H=b_H(H-\frac12)$ when $H>\frac12$, and using the convention $D_{0+}^\alpha:=I_{0+}^{-\alpha}$  for  $\alpha<0$, we have a uniform representation for $K_H^{-1}:$
\begin{equation}\label{e.Kinverse}
(K_H^{-1}\varphi )(t)=\dfrac{1}{b_H\Gamma(H+\frac12)} t^{H-\frac12}D_{0+}^{H-\frac12}(u^{\frac12-H}\varphi')(t),\, H\in (0, \frac12) \cup (\frac12, 1).
\end{equation}
Using  (\ref{e.chi}), (\ref{e.Kinverse}) and (\ref{ee4.1}), by direct computations we can get
\[\int_0^T \chi_t dM_t^H=\int_0^T (K_H^{-1}\int_0^\cdot X_udu)(s)dB_s,\]
where $B$ is the Wiener process defined in (\ref{ee4.1}). By Theorem \ref{connection}, we know that $B$ coincides with $W$, the Wiener process related with the fractional Brownian motion $W^H$. Therefore, \textcolor{black}{we conclude that} Theorem \ref{Girsanov2} has the following equivalent version.
\end{rem}}

\begin{rem}\label{infivar} We note that the infinitesimal variance parameter $\sigma^2$ of RFOU process \eqref{0.1}  can be estimated by using the quadratic variation of $\tilde X$ since $\langle \tilde X\rangle_t=\sigma^2 \langle M^H\rangle_t$ for $t>0$  by (\ref{de1}), where $\tilde X$ given in (\ref{de}) is a functional of $X$ and $M^H$ defined in (\ref{e4.0}) is the fundamental martingale.\end{rem}

\begin{thm}[Girsanov theorem for RFOU] \label{Girsanov1}
For $H\in(0,\frac12)\cup (\frac12, 1)$, let
\[\xi_T=\exp\left(\int_0^T\left(K_H^{-1}\int_0^\cdot \frac\alpha\sigma X_r dr\right)(s)dW_s-\frac12\int_0^T\left(K_H^{-1}\int_0^\cdot \frac{\alpha}\sigma X_r dr \right)^2(s)ds\right).\]
Then $ \{\sigma^{-1}X_s:0\le s\le T\}$ is a reflected fractional Brownian motion
under the new probability $\wh P$ defined by $d\wh P/dP=\xi_T$.
\end{thm}
In fact, $\xi_T=\eta_T$ a.s., and hence $\tilde P=\wh P$.
In the Appendix, we shall provide a direct proof of Theorem \ref{Girsanov1} by using fractional calculus.

Let $P^\alpha$ and $P^R$ denote the probability measures on $(C([0,T]),\mathcal B_T)$ induced by $X$  and $X^R$ respectively, where $X^R$ is the reflected fractional Brownian motions (i.e. $\alpha=0$). Similar to the proof of  \cite[Theorem 7.1]{MR1800857}, using Theorem \ref{Girsanov2}, we can show that $P^\alpha\sim P^R$, and
\beqys
\frac{dP^\alpha}{dP^R}=\eta_T^{-1}&=&\exp\left(-\frac\alpha\sigma \int_0^T\chi_sdM_s^H+\frac{\alpha^2}{2\sigma^2}\int_0^T\chi_s^2d\langle M^H \rangle_s\right)\\
&=&\exp\left(-\frac\alpha{\sigma^2}\int_0^T\chi_sd\tilde X_s-\frac{\alpha^2}{2\sigma^2}\int_0^T\chi_s^2d\langle M^H \rangle_s+\frac{\alpha}{\sigma^2}\int_0^T \chi_sd\tilde L_s\right).
\eeqys
Hence the MLE for $\alpha$ is given by
\beq\label{es}\tilde\alpha_T:=\dfrac{-\int_0^T \chi_sd\tilde X_s+\int_0^T \chi_s d\tilde L_s}{\int_0^T \chi_s^2 d\langle M^H \rangle_s}.\eeq
{\color{black}\begin{rem} Although Theorem \ref{Girsanov1} is equivalent to Theorem \ref{Girsanov2}, it is not practical to get MLE for $\alpha$ by applying Theorem \ref{Girsanov1} directly as above. This is because, to get an estimator just involving the information of $X$ and $L$, one needs to transform $\int_0^T \left( K_H^{-1}\int_0^\cdot \frac\alpha\sigma X_r dr\right)(s)dW_s$ to $\int_0^T \left((K_H^*)^{-1} K_H^{-1}\int_0^\cdot \frac{\alpha}{\sigma}X_rdr\right)(s)dW_s^H$ (and then use $X$ and $L$ to represent $W^H$). The integral $\int_0^T \left((K_H^*)^{-1} K_H^{-1}\int_0^\cdot \frac{\alpha}{\sigma}X_rdr\right)(s)dW_s^H$ is in Skorohod sense but not in Stratonovich sense, and hence if we replace $dW^H_s$ by $\frac1\sigma dX_s+\frac\alpha\sigma X_sds-\frac1\sigma dL_s$ in the integral, we have to deal with the computations of Skorohod integrals against $X$ and $L$, which is rather complex and \textcolor{black}{im}practical for an estimator.
\end{rem}}

\subsection{Properties of the MLE}\label{Prop}
The following lemma will play an essential role in establishing the strong consistency of the MLE $\tilde \alpha_T$ in Theorem \ref{prop4.2}.

\begin{lem}\label{fou}
For $H\in (0, \frac12) \cup (\frac12, 1)$, let $\{Y_t, t\geq0\}$ be the fractional Ornstein-Uhlenbeck (FOU) process satisfying $$dY_t=-\alpha Y_t dt+\sigma dW_t^H$$ with initial value $Y_0=x>0$.  Then we have {\color{black} $P\{X_t \ge |Y_t|,t\ge 0\}=1 $}. Furthermore, we also have
\begin{align*}
&\lim_{T\to\infty} \dfrac1T\int_0^T |Y_t|dt=\E\left[\left|\si\int_{-\infty}^0 e^{-\alpha(t-s)}dW_s^H+x\right|\right],{\color{black} \quad a.s.,}&\hbox{ when } \alpha>0,\\
&\lim_{T\to\infty} \dfrac1T\int_0^T |Y_t|dt=\infty, {\color{black} \quad a.s.,} & \hbox{ when } \alpha\le 0.
\end{align*}
\end{lem}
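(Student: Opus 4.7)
The plan has two parts: the pathwise comparison $X_t\ge|Y_t|$, and the Ces\`aro limits of $\int_0^T|Y_t|\,dt$.

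For the comparison, I would begin with the one-sided bound $X_t\ge Y_t$. Subtracting the two SDEs (driven by the same fBM and started at the same point $x$), the difference $Z_t:=X_t-Y_t$ satisfies the pathwise linear equation $dZ_t=-\alpha Z_t\,dt+dL_t$ with $Z_0=0$, so variation of parameters yields $Z_t=\int_0^t e^{-\alpha(t-s)}\,dL_s\ge0$ since $L$ is non-decreasing. For the complementary bound $X_t\ge -Y_t$, I would use a symmetry/coupling argument: since $-W^H$ is also a fractional Brownian motion, a parallel reflected system driven by $-W^H$ dominates $-Y$ by the same subtraction argument, and the distributional identity between the two systems combined with the Skorokhod representation \eqref{sko} of $L$ transports the bound back to the original probability space. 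Combining the two bounds gives $X_t\ge|Y_t|$.

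For the ergodic limit when $\alpha>0$, I would use the explicit representation $Y_t=xe^{-\alpha t}+\sigma\int_0^t e^{-\alpha(t-s)}\,dW_s^H$, which converges in distribution to the stationary centered Gaussian variable $Y_\infty:=\sigma\int_{-\infty}^0 e^{\alpha s}\,dW_s^H$ as $t\to\infty$. The associated stationary fractional Ornstein--Uhlenbeck process is a centered Gaussian process whose covariance decays to zero as the lag grows, so by Maruyama's criterion for stationary Gaussian processes it is ergodic under time shifts. Applying the Birkhoff--Khinchin ergodic theorem to the continuous functional $y\mapsto|y|$, and absorbing the exponentially decaying transient contribution $xe^{-\alpha t}$ into the Ces\`aro average, we obtain $\frac1T\int_0^T|Y_t|\,dt\to\E|Y_\infty|$ almost surely.

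For $\alpha\le 0$, the process $Y$ fails to be stationary and $|Y_t|\to\infty$. When $\alpha<0$, the deterministic mean $xe^{|\alpha|t}$ dominates the variance contribution from the Wiener integral, so standard Gaussian tail estimates force $|Y_t|\to\infty$ almost surely, and the Ces\`aro mean diverges. When $\alpha=0$, we have $Y_t=x+\sigma W_t^H$, so $|Y_t|$ is of order $t^H$ and $\frac1T\int_0^T|Y_t|\,dt$ grows like $T^H\to\infty$. The main obstacle in the whole proof is making the sign-flip coupling for the pathwise bound $X_t\ge -Y_t$ fully rigorous: the local time $L$ depends on the realization of $W^H$, and the two coupled Skorokhod problems must be related carefully via the explicit Skorokhod map \eqref{sko}. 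The ergodicity step is also non-standard because the fractional Ornstein--Uhlenbeck process is neither Markov nor a semimartingale, but Maruyama's theorem for stationary Gaussian processes precisely bypasses these obstructions.
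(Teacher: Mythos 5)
Your first bound $X_t\ge Y_t$ is exactly the paper's argument and is fine: $X_t-Y_t$ solves $d(X_t-Y_t)=-\alpha(X_t-Y_t)\,dt+dL_t$ from $0$, so $X_t-Y_t=\int_0^te^{-\alpha(t-s)}\,dL_s\ge0$. The genuine gap is the second bound $X_t\ge -Y_t$, and you were right to identify it as the main obstacle, but the fix you propose cannot work. An equality in law between the reflected system driven by $-W^H$ and the one driven by $W^H$ says nothing about the \emph{joint} law of that auxiliary system with $-Y$, so a pathwise inequality proved on the auxiliary space does not transport to an almost-sure inequality between $X$ and $-Y$ on the original space; \eqref{sko} does not rescue this. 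Worse, the inequality is simply false: if the driving path $\sigma W^H$ stays close to $-ct$ on $[0,1]$ for large $c$ (an event of positive probability by the support theorem for Gaussian measures, with both $X$ and $Y$ depending continuously on the driving path), then $Y_1$ is negative and of order $c$ while the Skorokhod map pins $X$ at the boundary, so $X_1\approx 0<|Y_1|$. For what it is worth, the paper's own proof has the same defect: its second computation, $X_t+Z_t\ge0$ with $Z=-Y$, is literally $X_t\ge Y_t$ again, not $X_t\ge-Y_t$. Only $X_t\ge\max(Y_t,0)$ is actually available; for $\alpha>0$ that still suffices for Theorem~\ref{prop4.2} (apply the ergodic theorem to $y\mapsto y^{+}$), but for $\alpha\le0$ the loss is real.

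On the Ces\`aro limits: for $\alpha>0$ your route (stationary FOU plus the ergodic theorem for stationary Gaussian processes) is the paper's route, except that your correct variation-of-constants formula $Y_t=xe^{-\alpha t}+\sigma\int_0^te^{-\alpha(t-s)}\,dW_s^H$ kills the transient and yields the limit $\E\bigl|\sigma\int_{-\infty}^0e^{\alpha s}\,dW_s^H\bigr|$ \emph{without} the ``$+x$'' appearing in the statement; you should flag that discrepancy rather than silently prove a different identity. For $\alpha<0$ your justification is wrong as stated: the standard deviation of $\sigma\int_0^te^{|\alpha|(t-s)}\,dW_s^H$ grows like $e^{|\alpha|t}$, the \emph{same} order as the mean $xe^{|\alpha|t}$, so the mean does not dominate and fixed-$t$ Gaussian tail bounds do not give almost-sure divergence. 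The correct argument (essentially the paper's) is that $e^{-|\alpha|t}Y_t$ converges a.s.\ to the nondegenerate Gaussian variable $x+\sigma\int_0^\infty e^{-|\alpha|s}\,dW_s^H$, whence $|Y_t|\to\infty$ exponentially a.s. For $\alpha=0$, self-similarity gives $\frac1T\int_0^T|W_t^H|\,dt\overset{d}{=}T^H\int_0^1|W_u^H|\,du$, which controls the distributional order but is not an almost-sure statement about a single path as $T\to\infty$; you need an additional step (for instance the Lamperti transform, under which $e^{-Hs}W^H_{e^s}$ is stationary and ergodic, or a zero--one law) to upgrade to almost-sure divergence of the Ces\`aro mean.
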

\begin{proof}
Since $d(X_t-Y_t)=-\alpha(X_t-Y_t)dt+dL_t$ with $X_0-Y_0=0$, we have $X_t\ge Y_t$ a.s. for all $t$.
Let $Z_t$ be the  FOU process satisfying $dZ_t=-\alpha Z_t dt-\sigma dW_t^H$ with initial value $Z_0=-x$. Then we have $Z_t=-Y_t.$ On the other hand, $d(X_t+Z_t)=-\alpha(X_t+Z_t)dt+dL_t$ with $X_0+Z_0=0$, and hence $-X_t\le Z_t=-Y_t.$ So we have $|Y_t|\le X_t$ a.s. for all $t$. {\color{black} By the countability of the set $\mathbb Q^+$ of positive rational numbers, we have that $P\{X_t \ge |Y_t|,t\in\mathbb Q^+\}=1 $, and $P\{X_t \ge |Y_t|,t\ge 0\}=1 $ just follows from the fact that both $X$ and $Y$ have continuous trajectories.}

Note that $Y_t$ has the following expression,
 \[ Y_t=x+\si\int_0^t e^{-\alpha(t-s)}dW_s^H=x+\si\int_{-\infty}^t e^{-\alpha(t-s)}dW_s^H-e^{-\alpha t}\si\int_{-\infty}^0 e^{\alpha s}dW_s^H.\]

 When $\alpha>0,$ the process $\displaystyle\left\{\tilde Y_t=\si\int_{-\infty}^t e^{-\alpha(t-s)}dW_s^H, t\geq0 \right\}$ is Gaussian, stationary and ergodic (see \cite{MR1961165}), and by ergodic theorem,
\[\lim_{T\to\infty}\frac1T\int_0^T |\tilde Y_t+x| dt=\E(|\tilde Y_0+x|),\]
which implies that
\[\lim_{T\to\infty}\frac1T\int_0^T | Y_t| dt=\E(|\tilde Y_0+x|)>0.\]

{\color{black} When $\alpha=0$, $Y_t=x+\si W_t^H$. It suffices to show
\begin{equation}\label{e5.13'}
\lim_{T\to\infty}\frac1T  \int_0^T |W_t^H| dt=\infty, \quad a.s.,
\end{equation}
which follows from Lemma \ref{ergodicity} in the Appendix.}

When $\alpha<0,$ let $\beta=-\alpha>0,$ and then
\[Y_t=x+\si\int_0^t e^{\beta(t-s)}dW_s^H=x+\si\int_0^{\infty} e^{\beta(t-s)}dW_s^H-\si\int_t^{\infty} e^{\beta(t-s)}dW_s^H.\]

Similarly as in \cite{MR1961165}, we can show that  $\displaystyle\left\{\hat Y_t=\si\int_t^{\infty} e^{\beta(t-s)}dW_s^H, t\ge0\right\}$ is stationary and ergodic. Note that now $Y_t=x+e^{\beta t}\hat Y_0-\hat Y_t$ and hence
\beqys \liminf_{T\to\infty} \frac1T\int_0^T |Y_t|dt
&\ge&  |\hat Y_0| \lim_{T\to\infty}\frac1T\int_0^T e^{\beta t}dt -\lim_{T\to\infty}\frac1T\int_0^T |x-\hat Y_t|dt\\
&\ge& \infty \quad\hbox{a.s. }
\eeqys
\end{proof}
\begin{thm}\label{prop4.2}
For $H\in (0, \frac12) \cup (\frac12, 1)$, the MLE $\tilde \alpha_T$ in \eqref{es} is strongly consistent.
\end{thm}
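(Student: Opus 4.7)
Plan: The natural starting point is to substitute the SDE~(\ref{de1}) satisfied by $\tilde X$ into the definition of $\tilde\alpha_T$. Writing $d\tilde X_s = -\alpha\chi_s\,d\langle M^H\rangle_s + \sigma\,dM_s^H + d\tilde L_s$, the $\tilde L$--contributions in the numerator of $\tilde\alpha_T$ cancel exactly against the $\int_0^T \chi_s\,d\tilde L_s$ correction term, and one obtains the clean decomposition
\[ \tilde\alpha_T - \alpha \;=\; -\,\sigma\,\frac{N_T}{\langle N\rangle_T}, \qquad N_T := \int_0^T \chi_s\,dM_s^H, \quad \langle N\rangle_T := \int_0^T \chi_s^2\,d\langle M^H\rangle_s. \]
Since $N$ is a continuous local martingale (with respect to the natural filtration of $M^H$), the classical strong law of large numbers for continuous local martingales yields $N_T/\langle N\rangle_T \to 0$ almost surely on the event $\{\langle N\rangle_T \to \infty\}$. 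Consequently the whole proof reduces to verifying
\[ \int_0^T \chi_s^2\,d\langle M^H\rangle_s \;\longrightarrow\; \infty \quad\text{a.s.\ as } T\to\infty. \qquad (\ast) \]

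The key input for $(\ast)$ is Lemma \ref{fou}: the pathwise domination $X_t\ge |Y_t|$ with $Y$ the unreflected FOU driven by the same $W^H$, together with $\int_0^T |Y_t|\,dt \to \infty$ almost surely (linearly in $T$ when $\alpha>0$, via ergodicity of $Y$, and super-linearly when $\alpha\le 0$). The remaining work is to convert the divergence of $\int_0^T X_s\,ds$ into the divergence of the quadratic variation, whose integrand is the fractional operator $\chi_s$ applied to $u^{1/2-H}X_u$. For $H<1/2$, Remark \ref{link} expresses $\chi_t$ as a positive constant times $t^{2H-1}$ multiplied by a fractional \emph{integral} of the non-negative function $u^{1/2-H}X_u$, which immediately yields a pointwise lower bound of $\chi_t$ in terms of $\int_0^t X_u\,du$, and $(\ast)$ follows by squaring and integrating against $d\langle M^H\rangle$. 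For $H\ge 1/2$ the most efficient route is the identity from the remark preceding Theorem~\ref{Girsanov2},
\[ \int_0^T \chi_t\,dM_t^H \;=\; \int_0^T \Bigl(K_H^{-1}\!\!\int_0^\cdot X_u\,du\Bigr)(s)\,dB_s, \]
so that $\langle N\rangle_T = \int_0^T[(K_H^{-1}\int_0^\cdot X_u\,du)(s)]^2\,ds$; one then exploits the $L^2$-isomorphism property of $K_H^{-1}$ from $I_{0+}^{H+1/2}(L^2)$ onto $L^2([0,T])$ to propagate divergence of a norm of $\int_0^\cdot X_u\,du$ through to divergence of $\langle N\rangle_T$.

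I expect the main obstacle to lie precisely in the $H>1/2$ case of $(\ast)$: because $K_H^{-1}$ is a non-local fractional \emph{derivative}, no clean pointwise lower bound on $\chi_t$ in terms of $X_t$ is available, and the isomorphism argument has to be made quantitative on the growing interval $[0,T]$. A natural fallback is a case split on the sign of $\alpha$: for $\alpha>0$, the RFOU $X$ is ergodic with a strictly positive invariant second moment and one can aim to show that $\langle N\rangle_T/T$ converges almost surely to a positive constant; for $\alpha\le 0$, the explicit divergence of $|Y_t|$ from Lemma~\ref{fou} already forces $\langle N\rangle_T$ to grow at least polynomially in $T$. In either regime, $(\ast)$ holds, and combining with the martingale strong law from the first paragraph completes the proof that $\tilde\alpha_T \to \alpha$ almost surely.
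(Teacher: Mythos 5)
Your skeleton matches the paper's: the decomposition $\tilde\alpha_T-\alpha=-\sigma N_T/\langle N\rangle_T$ with $N_T=\int_0^T\chi_s\,dM_s^H$ is exactly right, the reduction via the martingale strong law (the paper cites Lepingle's law of large numbers) is the paper's first step, and Lemma \ref{fou} is indeed the probabilistic input. Your treatment of $H<1/2$ is also essentially sound: there $D_{0+}^{H-1/2}=I_{0+}^{1/2-H}$ is positivity-preserving, and bounding the kernel $(t-u)^{-1/2-H}$ below by $t^{-1/2-H}$ does give a usable pointwise lower bound on $\chi_t$ in terms of $\int X_u\,du$ (the paper instead handles this case with a $\liminf$ computation after a Fubini rearrangement, but your route would work).

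The genuine gap is the case $H>1/2$, which you correctly flag as the obstacle but then do not resolve. The isomorphism idea is circular: the $I_{0+}^{H+1/2}(L^2)$-norm of $\phi=\int_0^\cdot X_u\,du$ \emph{is} (by definition of the image space) $\|K_H^{-1}\phi\|_{L^2([0,T])}$, so ``propagating divergence of a norm of $\phi$'' presupposes the very divergence you are trying to prove. The fallback is also not a proof: ergodicity of the \emph{reflected} FOU for $\alpha>0$ is not established anywhere in the paper (Lemma \ref{fou} only gives ergodicity of the unreflected $Y$), and even granting it, $\chi_t$ is a non-local fractional derivative of the whole path, built from differences $X_t-X_s$ that can be negative, so divergence of $\int_0^T X_s\,ds$ does not by itself force divergence of $\int_0^T\chi_s^2\,d\langle M^H\rangle_s$. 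The paper's missing idea is a Cauchy--Schwarz (Jensen) step combined with fractional integration by parts: writing $\chi_s=Cs^{2H-1}D_{0+}^{H-1/2}(u^{1/2-H}X_u)(s)$ and $d\langle M^H\rangle_s=Cs^{1-2H}ds$, one bounds
\[
\int_0^T \chi_s^2\,d\langle M^H\rangle_s \;\ge\; C\left[\frac{1}{T^{1-H}}\int_0^T \bigl(D_{0+}^{H-\frac12}u^{\frac12-H}X_u\bigr)(s)\,ds\right]^2
\;=\; C\left[\frac{1}{T^{1-H}}\int_0^T (T-s)^{\frac12-H}s^{\frac12-H}X_s\,ds\right]^2,
\]
where the last equality moves the fractional derivative onto the constant function $1$, turning it into the explicit kernel $(T-s)^{1/2-H}$. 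For $H\ge 1/2$ the factor $(1-s/T)^{1/2-H}(s/T)^{1/2-H}\ge 1$, so the right-hand side dominates $C\,T^{2-2H}\bigl(\frac1T\int_0^T X_s\,ds\bigr)^2$, and Lemma \ref{fou} finishes the argument for every sign of $\alpha$ simultaneously. Without this (or an equivalent) duality step, your proof of $(\ast)$ for $H>1/2$ does not go through.
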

\begin{proof}
To prove $\lim\limits_{T\to\infty} \tilde \alpha_T=\alpha$ a.s., it suffices to show that
\[\lim_{T\to\infty} \frac{\int_0^T \chi_sdM_s^H}{\int_0^T \chi_s^2d\langle M^H \rangle_s }=0 {\color{black} \quad a.s.}\]
Since $\displaystyle\int_0^T \chi_sdM_s^H$ is a martingale, by Lepingle's law of large numbers (see \cite[Lemma 17.4]{MR1800857}), it suffices to show that
\begin{equation}\label{e.4.2}
\lim_{T\to\infty}\int_0^T \chi_s^2d\langle M^H \rangle_s =\infty {\color{black} \quad a.s.}
\end{equation}
{\color{black} From Remark \ref{link}, we know that for $H\in (0, \frac12) \cup (\frac12, 1),$  \[\chi_t= Ct^{2H-1}D_{0+}^{H-\frac12}(u^{\frac12-H}X_u)(t),\]
for some constant $C$ depending on $H$ only.
Note that $D_{0+}^{H-\frac12}$  means $I_{0+}^{\frac12-H}$ when $H<\frac12.$}

Let $c=1-2H$ and $d=4H-2$. Then $c+d=2H-1$ and $c+\frac d2=0$. Notice that
\beqys \int_0^T \chi_s^2d\langle M^H\rangle_s  &=&C\int_0^T s^{2H-1}\left(D_{0+}^{H-\frac12}u^{\frac12-H}X_u\right)^2(s)ds\\
&=&C\int_0^T s^{c}\left(s^{\frac d2}\left(D_{0+}^{H-\frac12}u^{\frac12-H}X_u\right)(s)\right)^2ds\\
&\ge & C \left[\frac1{T^{\frac {1+c}{2}}}\int_0^T s^{c+\frac d2}\left(D_{0+}^{H-\frac12}u^{\frac12-H}X_u\right)(s)ds\right]^2\\
&= & C \left[\frac1{T^{\frac {1+c}{2}}}\int_0^T \left(D_{T-}^{H-\frac12}1\right)(s)s^{\frac12-H}X_sds\right]^2\\
&= & C \left[\frac1{T^{1-H}}\int_0^T (T-s)^{\frac12-H}s^{\frac12-H}X_sds\right]^2,
\eeqys
where the only inequality above follows by Jensen's inequality or H\"older's inequality, {\color{black} and the third equality follows by equation (\ref{ibp})}.

If {\color{black} $H> \dfrac12$}, we have
\[\left[\frac1{T^{1-H}}\int_0^T (T-s)^{\frac12-H}s^{\frac12-H}X_sds\right]^2=\left[\frac1{T^{H}} \int_0^T (1-\frac sT)^{\frac12-H}\left(\frac sT\right)^{\frac12-H}X_sds\right]^2 \ge  C  \left[\frac1{T^{H}}\int_0^T X_sds\right]^2.\]

Therefore, to prove (\ref{e.4.2}), it suffices to show that
\begin{equation}\label{e.4.4}
{\color{black}\liminf_{T\to\infty}}\frac1T\int_0^T X_sds>0 {\color{black} \quad a.s.},
\end{equation}
and it is an immediate consequence of Lemma \ref{fou}.

{\color{black} If $H< \dfrac12$,} we can show that
\begin{equation}\label{eq6.6}
\liminf_{T\to\infty} \frac1{T^{\frac32-H}} \int_0^T (T-s)^{\frac12-H}s^{\frac12-H}X_sds>0 {\color{black} \quad a.s.},
\end{equation}
which is a sufficient condition for  inequality (\ref{e.4.2}).

In fact, we have
\begin{align*}
&\int_0^T (T-s)^{\frac12-H}s^{\frac12-H}X_sds\\
=\frac{2}{1-2H}&\int_0^T\left(\int_s^T(T-r)^{-\frac12-H}dr\right) s^{\frac12-H}X_sds\\
=\frac{2}{1-2H}&\int_0^T\left(\int_0^rs^{\frac12-H}X_sds\right) (T-r)^{-\frac12-H}dr.
\end{align*}
For the term $\displaystyle\int_0^rs^{\frac12-H}X_sds,$ we have
\[\liminf_{r\to\infty} \frac1r\int_0^r s^{\frac12-H}X_sds\ge\liminf_{r\to\infty} \frac1r\int_1^r s^{\frac12-H}X_sds\ge \liminf_{r\to\infty} \frac1r\int_1^r X_sds={\color{black} \liminf_{r\to\infty}} \frac1r\int_0^r X_sds\quad {\color{black} a.s. }\]
{\color{black} By Lemma \ref{fou}, there exists a positive number $a$ such that $\liminf_{r\to\infty} \frac1r\int_0^r X_sds\ge a$  a.s. Choose $\varepsilon\in (0,a)$,  then for almost all $\omega\in \Omega$, there exists $N(\omega)\in (0,\infty)$, such that $\displaystyle \frac1r\int_0^r s^{\frac12-H}X_s(\omega)ds \ge a-\varepsilon$ for all $r>N(\omega).$}

Therefore, for almost all $\omega\in \Omega$,
\begin{align*}
&\liminf_{T\to\infty} \frac1{T^{\frac32-H}} \int_0^T (T-s)^{\frac12-H}s^{\frac12-H}X_s(\omega)ds\\
= & \frac{2}{1-2H}\liminf_{T\to\infty} \frac1{T^{\frac32-H}} \int_{N(\omega)}^T\left(\int_0^rs^{\frac12-H}X_s(\omega)ds\right) (T-r)^{-\frac12-H}dr\\
\ge & \frac{2}{1-2H}\liminf_{T\to\infty} \frac1{T^{\frac32-H}} \int_{N(\omega)}^T(a-\varepsilon)r (T-r)^{-\frac12-H}dr\\
=&\frac{2}{1-2H}\beta(1,\frac12-H)(a-\varepsilon),
\end{align*}
which implies (\ref{eq6.6}).

%{\color{black} Delete this sentence} {\it When $a=\infty,$ similarly we can show (\ref{eq6.6}) holds with $\infty$ as the value of limit on the left hand side.}
\end{proof}

\begin{thm}\label{mle}
For $H\in (0, \frac12) \cup (\frac12, 1)$, the MLE $\tilde \alpha_T$ of $\alpha$ admits the asymptotic normality, i.e.,
\[\dfrac{\tilde \alpha_T-\alpha}{\sigma}\sqrt{\int_0^T \chi_s^2d\langle M^H \rangle_s}\overset{\mathcal L}{\longrightarrow} N(0,1) \quad \text{as}\quad T\rightarrow\infty. \]
\end{thm}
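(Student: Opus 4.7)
The plan is to reduce the normalized error to a self-normalized continuous martingale and then invoke the central limit theorem for continuous local martingales. First I would substitute the semimartingale decomposition \eqref{de1}, $d\tilde X_s = -\alpha\chi_s\,d\langle M^H\rangle_s + \sigma\,dM^H_s + d\tilde L_s$, into the defining formula of $\tilde\alpha_T$. The $d\tilde L_s$ contribution produced by $-\int_0^T \chi_s d\tilde X_s$ exactly cancels the $+\int_0^T \chi_s d\tilde L_s$ already present in the numerator, leaving
\beqs
\tilde\alpha_T - \alpha \;=\; -\,\sigma\,\frac{\int_0^T \chi_s\,dM^H_s}{\int_0^T \chi_s^2\,d\langle M^H\rangle_s}.
\eeqs
Writing $N_T := \int_0^T \chi_s\,dM^H_s$, so that $\langle N\rangle_T = \int_0^T \chi_s^2\,d\langle M^H\rangle_s$, the quantity whose limit we want is
\beqs
\frac{\tilde\alpha_T - \alpha}{\sigma}\sqrt{\langle N\rangle_T} \;=\; -\,\frac{N_T}{\sqrt{\langle N\rangle_T}},
\eeqs
and the theorem reduces to the assertion that the right-hand side converges in law to $N(0,1)$.

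Since $\chi$ is adapted to the filtration generated by $W^H$ and $M^H$ is a continuous Gaussian martingale, $N$ is a continuous local martingale. In the proof of Theorem \ref{prop4.2} we have already established $\langle N\rangle_T\to\infty$ almost surely as $T\to\infty$. By the Dambis--Dubins--Schwarz theorem there exists, on a possibly enlarged probability space, a standard Brownian motion $B$ such that $N_T = B_{\langle N\rangle_T}$, and consequently $N_T/\sqrt{\langle N\rangle_T} = B_{\langle N\rangle_T}/\sqrt{\langle N\rangle_T}$. I would then invoke the central limit theorem for continuous local martingales with random normalization (see, e.g., the tools surrounding Lemma 17.4 of \cite{MR1800857} or Chapter 5 of \cite{Bishwalbook}). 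Operationally, one rescales $\widetilde N^{(T)}_t := N_{tT}/\sqrt{\langle N\rangle_T}$, which is a continuous local martingale whose quadratic variation at $t=1$ equals $1$ identically; the sample-path continuity of $N$ renders the Lindeberg-type condition automatic, and the conclusion $N_T/\sqrt{\langle N\rangle_T}\Rightarrow N(0,1)$ follows.

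The hard part is the martingale CLT step, because the normalization $\sqrt{\langle N\rangle_T}$ is random and intricately coupled to $B$ through the nonlinear dependence of $\chi_s$ on the RFOU path $X$. The virtue of the DDS time change is precisely that it decouples the Gaussian fluctuation of $B$ from the random clock $\langle N\rangle_T$, which is why the $N(0,1)$ limit holds uniformly for every $\alpha\in\mathbb R$ without a case split on the sign of $\alpha$. This contrasts with the consistency argument, where the long-time behavior of $\int_0^T X_s\,ds$ had to be analyzed separately for the ergodic ($\alpha>0$) and transient ($\alpha\le 0$) regimes via Lemma \ref{fou}; here the self-normalization absorbs that dichotomy.
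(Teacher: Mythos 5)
Your proposal is correct and follows the same route as the paper, which simply observes that $\tilde\alpha_T-\alpha=-\sigma\int_0^T\chi_s\,dM_s^H\big/\int_0^T\chi_s^2\,d\langle M^H\rangle_s$ and cites the central limit theorem for continuous martingales; the divergence $\langle N\rangle_T\to\infty$ needed for that theorem is exactly what was established in the consistency proof. Your added detail (the cancellation of the $d\tilde L$ terms and the Dambis--Dubins--Schwarz justification) merely fleshes out what the paper leaves to the cited reference.
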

\begin{proof}
This is an immediate consequence of the Central Limit Theorem for martingales (see, for instance, \cite[Theorem B.10]{MR1717690}.)
\end{proof}
%\begin{lem}\label{lemma4.4}
%Suppose $\beta,\gamma>0$, $\beta+\gamma<1, f\in I_{T-}^\beta(L^1(a,b))$, and $g\in I_{T-}^{1-\beta-\gamma}(L^1(a,b)).$ Then we have
%\[\int_0^T f(s) d(I_{0+}^\gamma g)(s)=\int_0^T (I_{T-}^\gamma f)(s)dg(s).\]
%\end{lem}
%\begin{proof}
%By Proposition \ref{p1}, both sides are equal to
%\[\int_0^T (D_{T-}^\beta f)(s) (D_{0+}^{1-\beta-\gamma}g_{T-})(s)ds,\]
%where $g_{T-}(s)=g(s)-g(T).$
%\end{proof}

%\begin{lem}\label{integral}
%Let $\gamma\in(0,1), f,g\in C^{1-\gamma+\epsilon}([0,T])$ for some suffficiently small $\epsilon>0,$, and suppose $g(0)=0.$ Then we have
%\[\int_0^T f(s) (D_{0+}^{1-\gamma}g)(s)ds=\int_0^T (I_{t-}^\gamma f)(s) dg_s.\]
%\end{lem}
%\begin{proof}
%By the integration by parts formula, we have, for the left hand side term
%\[\int_0^T f(s) (D_{0+}^{1-\gamma}g)(s)ds=\int_0^T (D_{T-}^{1-\gamma}f)(s) g(s)ds,\]
%and for the right hand side, noting that $g(0)=0$ and $(I_{T-}^\gamma f)(T)=0,$
%\begin{align*}
%&\int_0^T (I_{T-}^\gamma f)(s) dg_s\\
%=&-\int_0^T g(s) \frac{d}{ds} (I_{T-}^{\gamma}f)(s) ds\\
%=&\int_0^T g(s) (D_{T-}^{1-\gamma}f)(s) ds.
%\end{align*}
%This concluded the proof.
%\end{proof}

\setcounter{equation}{0}
\section{Sequential MLE}\label{seq}
Recall the strong consistency and asymptotic normality of the MLE $\tilde \alpha_T$, established in Theorem \ref{prop4.2} and Theorem \ref{mle}.  Such results are valuable from statistical analysis viewpoint of applications, however, \textcolor{black}{there are some drawbacks. Generally speaking,} the MLE is a biased estimator and its mean squared error (MSE) depends on the parameter to be estimated (Theorem \ref{mle}). Therefore, useful estimates for the bias and the MSE are not available (or very difficult to derive), and as a consequence, there is no guarantee that the bias (or, variance) decays  fast enough (as $T\rightarrow\infty$) to achieve the Cramer-Rao bound. {\color{black} Hence, we are unable to verify whether the classical Cramer-Rao lower bound can be attained or not for the MLE $\tilde \alpha_T$.}
To overcome such limitations, we consider the sequential estimation plan and verify that the proposed plan is significantly helpful both in asymptotic and non-asymptotic short time observation.

In contrast to the MLE, the proposed sequential maximum likelihood estimator (SMLE) is unbiased, exactly normally distributed (on the finite time observation), and its MSE has an explicit, simple expression that does not depend on the parameter to be estimated (see Theorem \ref{thm1} below).  The SMLE is uniformly normally distributed over the entire parameter space which is the real line.  Such results would be of ample use in applications to several areas such as engineering, financial and biological modeling where unknown parameter estimation is based on relatively shorter time observation.  Furthermore, an analog of the Cramer-Rao lower bound is proved and the SMLE is shown to be efficient among all unbiased estimation plans in the mean squared error sense (see Theorem \ref{thm2} below).

Define the stopping  time $\tau^H(h)$ as \beq\label{1.1.1} \tau^H(h):=\inf\left\{t\geq0: \inte{0}{t} \atob{\chi}{2}_sd\langle M^H \rangle_s\geq h \right\}, \quad 0<h<\infty. \eeq Then, $\atob{\mathcal F}{X}_{\tau^H(h)}$-measurable function $\wh \alpha_{\tau^H(h)}$ \beq\label{10.2}\wh \alpha_{\tau^H(h)}:=\frac{1}{h}\left[\inte{0}{\tau^H(h)}\chi_s d\tilde L_s - \inte{0}{\tau^H(h)}\chi_s d\tilde X_s\right] \eeq  is a sequential estimator.  From \eqref{de}--\eqref{de1}, it can be seen that \beq\label{1.2.1} \wh \alpha_{\tau^H(h)}=\alpha - \frac{\sigma}{h} \inte{0}{\tau^H(h)}\chi_s dM_s^H. \eeq

A proof of the next theorem follows along the similar lines of \cite{LipShir2001v2} by accommodating our basic model assumptions involving the reflection (i.e., state space) constraint and fractional Brownian noise with $H\in(0,\frac12) \cup(\frac12,1)$. In what follows, the index $\alpha$ in $\mathbb P$ and $\mathbb E$ emphasizes the fact that the distribution of the  state process is being considered for the prescribed value $\alpha$.
\begin{thm}\label{thm1}
For $H\in(0,\frac12) \cup(\frac12,1)$, the sequential estimation plan $(\tau^H(h),\wh \alpha_{\tau^H(h)})$, $0<h<\infty$, has the following properties: \bi \item[(i)] $\mathbb P_{\alpha}(\tau^H(h)<\infty)=1$ for all $\alpha\in(-\infty,\infty)$, \item[(ii)] $\mathbb E_\alpha(\wh \alpha_{\tau^H(h)}) =\alpha$ for all $\alpha\in(-\infty,\infty)$, \item[(iii)] $\mathbb E_\alpha(\wh \alpha_{\tau^H(h)}-\alpha)^2=\dfrac{\atob{\sigma}{2}}{h}$,
\item[(iv)] $\wh \alpha_{\tau^H(h)}-\alpha \overset{\mathcal L}{=} N(0,\dfrac{\sigma^2}{h})$,
\item[(v)] $\wh \alpha_{\tau^H(h)}\ra\alpha$ a.s. as $h\ra\infty$.  \ei
\end{thm}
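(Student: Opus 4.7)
The plan is to reduce every claim to the fact that, after a Dambis--Dubins--Schwarz time change, the error term
\[
\wh\alpha_{\tau^H(h)}-\alpha \;=\; -\frac{\sigma}{h}\inte{0}{\tau^H(h)}\chi_s\,dM_s^H
\]
is essentially a Brownian motion evaluated at the deterministic time $h$. I will carry this out in three stages corresponding to the finiteness of $\tau^H(h)$, its distributional consequences, and the final strong consistency as $h\to\infty$.

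First, for (i), I would note that $A(t):=\inte{0}{t}\chi_s^2\,d\langle M^H\rangle_s$ is continuous and non-decreasing in $t$, and that the computation carried out in the proof of Theorem \ref{prop4.2} (specifically equation (\ref{e.4.2}), which was established there for every $\alpha\in\R$) already shows $A(t)\to\infty$ a.s.\ as $t\to\infty$. Combined with continuity of $A$, the definition $\tau^H(h)=\inf\{t:A(t)\ge h\}$ forces $A(\tau^H(h))=h$ and $\tau^H(h)<\infty$ a.s.\ under $\mathbb P_\alpha$.

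Next, for (ii)--(iv), observe that the process $N_t:=\inte{0}{t}\chi_s\,dM_s^H$ is a continuous local martingale with respect to the natural filtration of $M^H$ (well-defined because Lemma \ref{quadvar} gives $\inte{0}{t}\chi_s^2\,d\langle M^H\rangle_s<\infty$ a.s.), and its quadratic variation equals $A(t)$, which tends to $\infty$ a.s. By the Dambis--Dubins--Schwarz theorem there exists a standard Brownian motion $\{B_u\}_{u\geq0}$, possibly on an enlargement of the space, such that $N_t=B_{A(t)}$ for all $t\geq0$. Since $A(\tau^H(h))=h$ a.s., we obtain
\[
\inte{0}{\tau^H(h)}\chi_s\,dM_s^H \;=\; B_h \;\sim\; N(0,h),
\]
and therefore $\wh\alpha_{\tau^H(h)}-\alpha=-\frac{\sigma}{h}B_h\sim N(0,\sigma^2/h)$, which is exactly (iv). Taking first and second moments of $B_h$ immediately yields (ii) and (iii), and the distribution does not depend on $\alpha$, which delivers the uniformity claim.

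Finally, for (v), I would apply the strong law of large numbers for Brownian motion, $B_h/h\to 0$ a.s.\ as $h\to\infty$, to the representation $\wh\alpha_{\tau^H(h)}-\alpha=-(\sigma/h)B_h$ from the previous step; this gives $\wh\alpha_{\tau^H(h)}\to\alpha$ a.s. The main obstacle in this program is the justification of the DDS time change in this setting: one has to verify that $N_t$ is genuinely a continuous local martingale of the prescribed quadratic variation despite $M^H$ being merely a Gaussian (non-Brownian) martingale with the explicit bracket $\langle M^H\rangle_t=\lambda_H^{-1}t^{2-2H}$, and that the identity $A(\tau^H(h))=h$ holds pathwise so that $B_{A(\tau^H(h))}=B_h$ really is $N(0,h)$ under $\mathbb P_\alpha$ for every $\alpha$; once this is set up, the five conclusions are genuinely immediate consequences of the one identity $N_{\tau^H(h)}\sim N(0,h)$.
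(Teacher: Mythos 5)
Your proposal is correct and follows essentially the same route as the paper: part (i) from the divergence of $\int_0^T\chi_s^2\,d\langle M^H\rangle_s$ already established in Theorem \ref{prop4.2}, and parts (ii)--(v) from the representation \eqref{1.2.1} together with the Dambis--Dubins--Schwarz time change, which is exactly the cited fact that $h\mapsto\int_0^{\tau^H(h)}\chi_s\,dM_s^H$ is a standard Brownian motion indexed by $h$. The only differences are cosmetic: the paper additionally rederives the estimator via the likelihood ratio before proving (i)--(v), and your treatment of (v) via the strong law for Brownian motion is if anything more explicit than the paper's ``immediate consequence of (iv).''
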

\begin{proof}

We first show that the $\atob{\mathcal F}{X}_{\tau^H(h)}$-measurable random variable $\wh\alpha_{\tau^H(h)}$ is indeed the SMLE as follows. Consider an arbitrary stopping time $\tau$ with respect to the filtration $\{\atob{\mathcal F}{X}_t\}_{t\geq0}$ generated by the process $X$. Similar to \cite[Theorem 7.1]{MR1800857}, the probability measures $\atob{\mathbb P}{\theta}_{\tau, X}$ and $\atob{\mathbb P}{\alpha}_{\tau, X}$ induced by the processes stopped at time $\tau$
\beqs\label{2.0} \left. \begin{gathered} dX_t^\theta=-\theta X_t^\theta dt + \sigma dW^H_t + dL_t^\theta, \quad X_t^\theta \geq \textcolor{black}{0} \quad \mbox{for all } t\geq0,  \\  dX_t^\alpha=-\alpha X_t^\alpha dt + \sigma dW^H_t + dL_t^\alpha, \quad X_t^\alpha \geq \textcolor{black}{0} \quad \mbox{for all } t\geq0,  \end{gathered}  \\ \right\} \eeqs respectively, are equivalent and their Radon-Nikodym derivative is given by

  \beq\label{2.1}  \left.\frac{d\atob{\mathbb P}{\alpha}_{\tau,X}}{d\atob{\mathbb P}{\theta}_{\tau,X}}\right|_{\atob{\mathcal F}{X}_{\tau,\theta}} = \exp\left\{-\frac{1}{\sigma} \inte{0}{\tau}(\alpha-\theta)\atob{\chi}{\alpha}_tdM_t^H+\frac{1}{2\atob{\sigma}{2}}\inte{0}{\tau}(\alpha-\theta)^2(\atob{\chi}{\alpha}_t)^2{\color{black}d\langle M^H \rangle _t} \right\}, \eeq
 where $\atob{\mathcal F}{X}_{\tau,\theta}$ is the natural filtration generated by $\{\atob{X}{\theta}_t:0\leq t\leq \tau\}$, and \[\chi^\alpha:= \displaystyle\frac{d}{d\langle M^H \rangle_t} \int_0^t k_H(t,s)X_s^\alpha ds.\]

Let $\atob{X}{0}_t$ be the reflected fractional Brownian motion (RFBM) satisfying $d\atob{X}{0}_t=\sigma dW^H_t+dL_t$, $t\geq0$ and $\atob{\mathbb P}{0}_\tau$ be the measure induced by the RFBM $\atob{X}{0}$.  Then the log likelihood function $\ell_\tau(\alpha)$ is given by
 \[{\ell}_\tau(\alpha):=\sigma^2\log\frac{d\atob{\mathbb P}{\alpha}_{\tau,X}}{d\atob{\mathbb P}{0}_{\tau,X}} = -\alpha \inte{0}{\tau} \chi_t^\alpha d\tilde X_t -\frac{\atob{\alpha}{2}}{2}\inte{0}{\tau}\atob{(\chi^\alpha_t)}{2}d\langle M^H \rangle_t + \alpha \inte{0}{\tau}\chi_t d\tilde L_t. \]

 Then, by solving the equation $$\dot{\ell}_\tau(\alpha):=\frac{d}{d\alpha}\left(\sigma^2\log\frac{d\atob{\mathbb P}{\alpha}_{\tau,X}}{d\atob{\mathbb P}{0}_{\tau,X}}\right)=0,$$
 we obtain  the SMLE given by
 \beq\label{2.2}
  \wh\alpha_{\tau} = \frac{-\inte{0}{\tau}\chi_s^\alpha d\tilde X_s + \inte{0}{\tau}\chi_s^\alpha d\tilde L_s}{\inte{0}{\tau}\atob{(\chi_s^\alpha)}{2}d\langle M^H \rangle_s}=\alpha- \frac{\sigma \inte{0}{\tau}\chi_s^\alpha d M^H_s }{\inte{0}{\tau}\atob{(\chi_s^\alpha)}{2}d\langle M^H \rangle_s}.
 \eeq
Now, setting $\tau=\tau^H(h)$ in \eqref{2.2}, we obtain for $\wh\alpha_\tau=\wh\alpha_{\tau^H(h)}$ the representation given by \eqref{1.2.1}.

We shall use $\chi$ instead of $\chi^\alpha.$ To verify part (i), it suffices to show that $\lim\limits_{T\to\infty}\int_0^T \chi_s^2d\langle M^H\rangle_s=\infty$ a.s., which has already been proved in the proof of  Theorem \ref{prop4.2}.  The claims in (ii), (iii) and (iv) follow from the fact that the process $(\inte{0}{\tau^H(h)}\chi_s dM^H_s,h\ge0) $ is a standard Brownian motion indexed by $h$ (see, for instance, \cite[Theorem 4.6]{MR1121940}). Also, the result (v) is an immediate consequence of (iv).
\end{proof}

Next, we show that the proposed sequential estimation plan $(\tau^H(h),\wh\alpha_{\tau^H(h)})$ is efficient among all unbiased estimation plans in the mean squared error sense. \textcolor{black}{More precisely, we prove an analog of the Cramer-Rao lower bound for arbitrary unbiased estimation plans.}  %To do this, we will assume that the reflecting boundary $b=0$ and prove an analog of the Cramer-Rao lower bound for arbitrary unbiased estimation plans.
\begin{thm}\label{thm2}
%Assume $b=0$.
Let the sequential plan $(\tau,\wh \alpha_\tau(X))$ be an arbitrary unbiased estimation plan for the RFOU process $\{X_t\}$ with the parameter $\alpha\in(-\infty,\infty)$, namely, \beq\label{4.1} \mathbb E_{\alpha}(\wh \alpha_\tau(X)) =\alpha \quad \mbox{for all}\quad \alpha\in(-\infty,\infty). \eeq Suppose also that $0<\mathbb E_\alpha[\inte{0}{\tau}\chi_s^2 d\langle M^H \rangle_s]<\infty$.  Then,  \beq\label{4.2} \var_\alpha(\wh\alpha_\tau)=\mathbb E_\alpha[\wh\alpha_\tau-\alpha]^2 \geq \frac{\atob{\sigma}{2}}{\mathbb E_\alpha[\inte{0}{\tau}\chi_s^2d\langle M^H \rangle_s]}.\eeq
\end{thm}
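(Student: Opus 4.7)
The plan is to adapt the classical Cramér–Rao argument to the sequential setup by exploiting the Girsanov-type change of measure from Theorem \ref{Girsanov2}. Fix $\alpha\in\mathbb{R}$ and let $Z_\tau(\alpha):=dP^\alpha/dP^0$ restricted to $\mathcal{F}^X_\tau$, where $P^0$ is the law of the reflected fractional Brownian motion (the case $\alpha=0$). Using the explicit form of $dP^\alpha/dP^R$ recorded just before Theorem \ref{prop4.2}, write
\[
\log Z_\tau(\alpha) = -\tfrac{\alpha}{\sigma^{2}}\!\int_{0}^{\tau}\!\chi_s\, d\tilde X_s \;-\; \tfrac{\alpha^{2}}{2\sigma^{2}}\!\int_{0}^{\tau}\!\chi_s^{2}\, d\langle M^H\rangle_s \;+\; \tfrac{\alpha}{\sigma^{2}}\!\int_{0}^{\tau}\!\chi_s\, d\tilde L_s .
\]
The assumption $b=0$ is used here to make the reflection-corrected process $\tilde L$ interact correctly with the drift $-\alpha X_t$, so that the Girsanov density has precisely this form.

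Next I would compute the score $\dot\ell_\tau(\alpha):=\partial_\alpha \log Z_\tau(\alpha)$ and simplify it using the decomposition \eqref{de1}, namely $d\tilde X_s=-\alpha\chi_s\, d\langle M^H\rangle_s+\sigma\, dM^H_s+d\tilde L_s$. A direct substitution causes the drift and reflection pieces to cancel, leaving the compact form
\[
\dot\ell_\tau(\alpha)\;=\;-\frac{1}{\sigma}\int_{0}^{\tau}\chi_s\, dM^H_s .
\]
Since $M^H$ is a continuous square-integrable martingale with bracket $\langle M^H\rangle$, and since the hypothesis $\mathbb{E}_\alpha[\int_0^\tau \chi_s^2 d\langle M^H\rangle_s]<\infty$ upgrades the stopped stochastic integral from a local to a true martingale, the Itô isometry applied with the stopping time $\tau$ yields the Fisher information
\[
I(\alpha)\;=\;\mathbb{E}_\alpha[\dot\ell_\tau(\alpha)^{2}]\;=\;\frac{1}{\sigma^{2}}\,\mathbb{E}_\alpha\!\left[\int_{0}^{\tau}\chi_s^{2}\, d\langle M^H\rangle_s\right].
\]

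The final step is standard once differentiation under the expectation is justified. Writing the unbiasedness condition \eqref{4.1} as $\int \widehat\alpha_\tau(X)\, Z_\tau(\alpha)\, dP^0=\alpha$ and differentiating in $\alpha$ gives $\mathbb{E}_\alpha[\widehat\alpha_\tau\,\dot\ell_\tau(\alpha)]=1$; combining with $\mathbb{E}_\alpha[\dot\ell_\tau(\alpha)]=0$ (obtained by differentiating $\mathbb{E}_\alpha[1]=1$) produces the key identity
\[
\mathbb{E}_\alpha\bigl[(\widehat\alpha_\tau-\alpha)\,\dot\ell_\tau(\alpha)\bigr]\;=\;1 .
\]
Cauchy–Schwarz then yields $1\le \mathrm{Var}_\alpha(\widehat\alpha_\tau)\cdot I(\alpha)$, which is precisely \eqref{4.2}.

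The step I expect to be genuinely technical is the interchange of $\partial_\alpha$ and the $P^0$-expectation: one has to dominate $|\widehat\alpha_\tau\,\partial_\alpha Z_\tau(\alpha)|$ uniformly in a neighborhood of $\alpha$ by an integrable random variable. The natural route is to exploit the Gaussian–exponential character of $Z_\tau(\alpha)$ together with the uniform exponential moment bounds on $\int_0^\tau \chi_s^2\, d\langle M^H\rangle_s$ afforded by Lemma \ref{quadvar} and Lemma \ref{fernique} (applied on short time pieces and chained, as in the proof of Theorem \ref{Girsanov2}); this is also where the finite–Fisher–information hypothesis plays its role. Apart from this regularity issue, the remainder of the argument is an almost mechanical transcription of the Wiener-case Cramér–Rao bound (cf.\ Chapter 17.5 of \cite{LipShir2001v2}) to the fractional, reflected setting.
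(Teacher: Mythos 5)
Your proposal is correct and takes essentially the same route as the paper: differentiate the unbiasedness identity under the $P^0$-expectation via the Girsanov density, reduce the score to $-\sigma^{-1}\int_0^\tau\chi_s\,dM^H_s$ using \eqref{de1}, and conclude by Cauchy--Schwarz together with the It\^o isometry for the stopped martingale. The only cosmetic difference is that the paper carries the three integrals $\int\chi\,d\tilde X$, $\int\chi^2\,d\langle M^H\rangle$, $\int\chi\,d\tilde L$ through the Cauchy--Schwarz step before collapsing them to $\int\chi\,dM^H$, and it justifies the interchange of $\partial_\alpha$ with the expectation by citing Liptser--Shiryaev rather than the exponential-moment argument you sketch.
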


\begin{rem}
A sequential  estimation plan $(\tau,\wh\alpha_\tau)$ is said to be \emph{efficient} in the MSE sense if for which \eqref{4.2}  becomes an equality for all $\alpha\in(-\infty,\infty)$.  Notice that since $\mathbb E_\alpha[\wh\alpha_{\tau^H(h)}-\alpha]^2 =\frac{\atob{\sigma}{2}}{h}$ as established in Theorem \ref{thm1} (iii) and $\mathbb E_\alpha[\inte{0}{\tau^H(h)}\chi_s^2 d\langle M^H \rangle_s]=h$ by the definition of $\tau^H(h)$, the sequential estimation plan $(\tau^H(h),\wh \alpha_{\tau^H(h)})$ is efficient in the MSE sense.
\end{rem}

\begin{proof}
Without loss of generality, we assume that $\sigma=1$.  In view of the Radon-Nikodym derivative expression in \eqref{2.1} with $\theta=0$, differentiating both sides of \eqref{4.1} with respect to $\alpha$ yields that \beq\label{5.1} \mathbb E_\alpha \left[ \wh\alpha_\tau\left\{-\inte{0}{\tau} \chi_s d\tilde X_s-\alpha \inte{0}{\tau} \chi_s^2 d\langle M^H \rangle_s +\int_0^\tau \chi_s d\tilde L_s\right\} \right] =1 \eeq  (cf. the proof of Theorem 7.22 in \cite{LipShir2001v2}). Then, since \beqys\mathbb E_\alpha \left[\inte{0}{\tau} \chi_s d\tilde X_s+\alpha \inte{0}{\tau} \chi_s^2 d\langle M^H \rangle_s-\int_0^\tau \chi_s d\tilde L_s \right] &=& \mathbb E_\alpha \left[ \inte{0}{\tau}\chi_s d M^H_s \right]\\ &=&0, \eeqys  it follows that \beq\label{cs} \mathbb E_\alpha\left[(\wh \alpha_\tau-\alpha)\left(-\inte{0}{\tau} \chi_s d\tilde X_s-\alpha \inte{0}{\tau} \chi_s^2 d\langle M^H \rangle_s+\int_0^\tau \chi_s d\tilde L_s\right)\right]=1.\eeq Applying Cauchy-Schwarz inequality in \eqref{cs}, we obtain \beqy\label{12} 1&\leq& \mathbb E_\alpha[\wh\alpha_\tau-\alpha]^2\mathbb E_\alpha\left[\left(\inte{0}{\tau} \chi_s d\tilde X_s+\alpha \inte{0}{\tau} \chi_s^2 d\langle M^H \rangle_s-\int_0^\tau \chi_s d\tilde L_s\right)^2\right]\nonumber \\ &=&  \mathbb E_\alpha[\wh\alpha_\tau-\alpha]^2\mathbb E_\alpha\left[\left(\inte{0}{\tau}\chi_sdM_s^H\right)^2\right]\nonumber \\ &=&  \mathbb E_\alpha[\wh\alpha_\tau-\alpha]^2\mathbb E_\alpha\left[\inte{0}{\tau}\chi_s^2d\langle M^H \rangle_s\right], \eeqy where the first equality  follows  from the state equation \eqref{0.1}.  Since $0<\mathbb E_\alpha[\inte{0}{\tau}\chi_s^2d\langle M^H \rangle_s]<\infty$, we can divide both sides of \eqref{12} by this factor, and then the desired result follows.
\end{proof}

\setcounter{equation}{0}
\section{Appendix}
The following Girsanov theorem for the fractional Brownian motion was given in \cite{MR1934157}.

\begin{theorem}\label{girsanov}
Let $\displaystyle \tilde W^H_t= W^H_t+\int_0^t u_s ds$, where $\{u_t, t\in[0,T]\}$ is an adapted process with respect to the filtration $\mathcal F^{W^H}$ generated by $W^H$ with integrable trajectories.
Denote
\[\xi_T=\exp\left(-\int_0^T\left(K_H^{-1}\int_0^\cdot u_rdr\right)(s)dW_s-\frac12\int_0^T\left(K_H^{-1}\int_0^\cdot u_rdr \right)^2(s)ds\right),\]
where $W$ is the Wiener Process defined by (\ref{bm}).   Assume that
\begin{itemize}
\item[(i)] $\displaystyle\int_0^\cdot u_sds\in I_{0^+}^{H+\frac12}(L^2([0,T]))$, almost surely.
\item[(ii)] $\mathbb E(\xi_T)=1$.
\end{itemize}
Then the shifted process $\tilde W^H$ is an $\mathcal F_t^{W^H}$-fractional Brownian motion with Hurst parameter $H$ under the new probability $\tilde P$ defined by $d\tilde P/dP=\xi_T.$
\end{theorem}

We shall use the above Girsanov theorem and fractional calculus to provide an independent proof for Theorem \ref{Girsanov1}.

{\it Proof of  Theorem \ref{Girsanov1}.}
 If we can show that $\E(\xi_T)=1,$ then by Theorem \ref{girsanov},  $\displaystyle\wh W_t^H=W_t^H-\frac\alpha\sigma\int_0^t X_sds$ is a fractional Brownian motion under $\wh P$. Hence $\sigma^{-1}X_t=\sigma^{-1}X_0+ {\color{black} \wh W_t^H}+\sigma^{-1}L_t$ is a reflected fractional Brownian motion under $\wh P$.
We now show that for $\Delta t$ small enough, we have
\begin{equation}\label{novikov}
\mathbb E \exp\left(\int_t^{t+\Delta t}\left(K_H^{-1}\int_0^\cdot \lambda_0 X_r dr \right)^2(s)ds\right)<\infty.
\end{equation}

We shall prove this in two cases $H<\frac12$ and $H>\frac12$, respectively.

Case $H<\frac12$: Observe that
\beqys
\left|\left(K_H^{-1}\int_0^\cdot  X_rdr\right)(s)\right|
&=&\left|s^{H-\frac12}I_{0^+}^{\frac12-H}s^{\frac12-H}X_s\right|\\
&=&Cs^{H-\frac12}\left|\int_0^s (s-r)^{-\frac12-H}r^{\frac12-H}X_rdr\right|\\
&\le& C(1+|x|+\|X\|_{\infty,T}).
\eeqys
Then when $\Delta t$ small enough, (\ref{novikov}) follows by Lemma \ref{fernique}.

Case $H>\frac12$: We have that
\beqys
&&\left(K_H^{-1}\int_0^\cdot  X_rdr\right)(s)\\
&=&s^{\frac12-H}X_s+(H-\frac12)s^{H-\frac12}\left(X_s\int_0^s \frac{s^{\frac12-H}-r^{\frac12-H}}{(s-r)^{\frac12+H}}dr+\int_0^s \frac{X_s-X_r}{(s-r)^{\frac12+H}}r^{\frac12-H}dr\right).
\eeqys
Noting that
\[\int_0^s \frac{s^{\frac12-H}-r^{\frac12-H}}{(s-r)^{\frac12+H}}dr=C s^{1-2H},\] and
\[|X_s-X_r|\le \|X\|_{H-\epsilon,T} (s-r)^{H-\epsilon}, \quad 0\le r<s\le T,\]
we have, if we choose $\epsilon<\frac12$,
\beqys
\left|\left(K_H^{-1}\int_0^\cdot  X_rdr\right)(s)\right|
&\le& C \left(s^{\frac12-H}s^{H-\epsilon}\|X\|_{H-\epsilon,T}+ s^{H-\frac12}\|X\|_{H-\epsilon,T} \int_0^s (s-r)^{H-\epsilon}(s-r)^{-\frac12-H}r^{\frac12-H}dr\right)\\
&\le &C\|X\|_{H-\epsilon,T}(s^{\frac12-\epsilon}+ s^{H-\frac12})\\
&\le & C \|X\|_{H-\epsilon,T}.
\eeqys
Then when $\Delta t$ small enough,  (\ref{novikov}) follows from Lemma \ref{fernique}.
%Actually, using the above inequality, (i) if $\lambda\le \lambda_0$, by Novikov\rq{}s condition, \[\left\{\exp\left(-\lambda\int_0^T\left(K_H^{-1}\int_0^\cdot X^R_s ds\right)(s)dW_s-\frac12\int_0^T\left(K_H^{-1}\int_0^\cdot \lambda X^R_s ds \right)^2(s)ds\right), t\in[0,T]\right\}\]
%is a $\mathcal F^{W^H}$-martingale, and (ii) if $\lambda> \lambda_0$
%\[\mathbb E\exp\left(-\lambda\int_0^T\left(K_H^{-1}\int_0^\cdot \frac\alpha\sigma X^R_s ds\right)(s)dW_s-\frac12\int_0^T\left(K_H^{-1}\int_0^\cdot \lambda X^R_s ds \right)^2(s)ds\right)<\infty\]
%by the fact that
%\[\left\{\exp\left(-\lambda\int_0^T\left(K_H^{-1}\int_0^\cdot X^R_s ds\right)(s)dW_s-\frac12\int_0^T\left(K_H^{-1}\int_0^\cdot \lambda X^R_s ds \right)^2(s)ds\right), t\in[0,T]\right\}\]

%{\color{black} The rest argument actually is an explanation for the claim that $(16)$ is a sufficient condition to show the martingale property of the Randon-Nikodym derivative in \cite[Theorem 3]{MR1934157}, for which the reference seems not correct. If we can find a correct reference for that claim, we may use the same argument as in \cite[Theorem 3]{MR1934157}.}

Now choose $n$ big enough such that $\dfrac Tn\le \Delta t$, and let $t_i=\dfrac{iT}{n}, i=1,\cdots, n.$
Denoting  $\left(K_H^{-1}\int_0^\cdot \frac\alpha\sigma X_r dr\right)(s)$ by $f(s)$, we obtain
\beqys
\E[\xi_T]&=&\E\exp\left(\int_0^Tf(s)dW_s-\frac12\int_0^Tf^2(s)ds\right)\\
&=&\E\left(\E\left[\left.\exp\left(\int_0^Tf(s)dW_s-\frac12\int_0^Tf^2(s)ds\right)\right|\mathcal F_{t_{n-1}}\right]\right)\\
&=&\E\bigg(\exp\left(\int_0^{t_{n-1}}f(s)dW_s-\frac12\int_0^{t_{n-1}}f^2(s)ds\right)\\
&&\quad \times\E\left[\left.\exp\left(\int_{t_{n-1}}^T f(s)dW_s-\frac12\int_{t_{n-1}}^Tf^2(s)ds\right)\right|\mathcal F_{t_{n-1}}\right]\bigg).
\eeqys
We have that $\displaystyle\E\left[\left.\exp\left(\int_{t_{n-1}}^T f(s)dW_s-\frac12\int_{t_{n-1}}^Tf^2(s)ds\right)\right|\mathcal F_{t_{n-1}}\right]\le 1$ a.s., since\\ $\displaystyle\exp\left(\int_{0}^\cdot f(s)dW_s-\frac12\int_{t_{n-1}}^\cdot f^2(s)ds\right)$ is a positive local martingale and hence a supermartingale.

On the other hand, since $ \displaystyle \E\left[\exp\left(\frac12\int_{t_{n-1}}^Tf^2(s)ds\right)\right]<\infty$, then by Novikov Criterion, we know that $\displaystyle\E\left[\exp\left(\int_{t_{n-1}}^T f(s)dW_s-\frac12\int_{t_{n-1}}^Tf^2(s)ds\right)\right]= 1$. This implies that \[\E\left[\left.\exp\left(\int_{t_{n-1}}^T f(s)dW_s-\frac12\int_{t_{n-1}}^Tf^2(s)ds\right)\right|\mathcal F_{t_{n-1}}\right]= 1 \quad \mbox{a.s.}\]

We use the above procedure $n$ times, and obtain that  $\E(\xi_T)=1.$
\hfill $\Box$

{\color{black} \begin{lemma}\label{ergodicity} For $H\in(0,1)$, we have
\begin{equation}\label{e7.11''}
\limsup_{T\to \infty} \frac1{T^{H+1}\ln T} \int_0^T |W_t^H|dt\le \E[|W_1^H|],\quad a.s.,
\end{equation} and
\begin{equation}\label{e7.11'}
\liminf_{T\to \infty} \frac1{T\ln T} \int_0^T |W_t^H|dt\ge\frac{H}{H+1}\E[|W_1^H|],\quad a.s.
\end{equation}
\end{lemma}
\begin{proof}
The Gaussian process $\{e^{-Ht}W^H_{e^t}, t\ge 0\}$ is stationary and ergodic. Indeed, the covariance function is given by
\[\rho(t,s):=\E\left[e^{-Ht}W^H_{e^t}e^{-Hs}W^H_{e^s}\right]=\frac12\left(e^{H(t-s)}+e^{H(s-t)}-\left[e^{t-s}+e^{s-t}-2\right]^H\right).\]
The covariance function $\rho$ is a function of $|t-s|$, and hence the Gaussian process $\{e^{-Ht}W^H_{e^t}, t\ge 0\}$ is stationary. On the other hand, the covariance function $\rho$ vanishes to $0$ as $|t-s|$ goes to $\infty$, because
\[\lim_{x\to \infty} \left(x^H+x^{-H}-[x+x^{-1}-2]^H\right)=0.\] This implies that the Gaussian process is ergodic. By the ergodic theorem, we have
\[\lim_{T\to \infty} \frac1T \int_0^T e^{-Ht}|W_{e^t}^H|dt=\E[|W_1^H|],\quad a.s.,\]
which is equivalent to
\begin{equation}\label{e7.12}
\lim_{T\to \infty} \frac1{\ln T} \int_1^T s^{-H-1}|W_{s}^H|ds=\E[|W_1^H|],\quad a.s.
\end{equation}
Therefore,
\begin{align*}
\limsup_{T\to \infty} \frac1{T^{H+1}\ln T} \int_0^{T} |W_s^H|ds&=  \limsup_{T\to \infty} \frac1{\ln T} T^{-(H+1)}\int_1^{T} |W_s^H|ds\notag\\
&\le \limsup_{T\to \infty}  \frac1{\ln T}  \int_{1}^{T} s^{-H-1} |W_s^H|ds\notag\\
&=\E[|W_1^H|] \quad a.s.,
\end{align*}
and (\ref{e7.11''}) is proved.

Now we show (\ref{e7.11'}). For $\alpha\in(0,1)$, the equality (\ref{e7.12}) implies
\[\lim_{T\to \infty} \frac1{\ln T^\alpha} \int_1^{T^\alpha} s^{-H-1}|W_{s}^H|ds=\E[|W_1^H|],\quad a.s.,\]
and hence
\begin{equation}\label{e7.13}
\lim_{T\to \infty} \frac1{\ln T} \int_1^{T^\alpha} s^{-H-1}|W_{s}^H|ds=\alpha\E[|W_1^H|],\quad a.s.
\end{equation}

By the relations (\ref{e7.12}) and (\ref{e7.13}),
\begin{equation}\label{e7.14}
\lim_{T\to \infty} \frac1{\ln T} \int_{T^\alpha}^T s^{-H-1}|W_{s}^H|ds=(1-\alpha)\E[|W_1^H|],\quad a.s.
\end{equation}
If we choose $\alpha=\frac1{H+1}$, then
\begin{align*}
\liminf_{T\to \infty} \frac1{T\ln T} \int_0^T |W_s^H|ds
&\ge \liminf_{T\to \infty}  \frac1{\ln T} T^{-\alpha(H+1)} \int_{T^\alpha}^T |W_s^H|ds\\
&\ge \liminf_{T\to \infty} \frac1{\ln T}\int_{T^\alpha}^T s^{-H-1}|W_{s}^H|ds\\
&=\frac{H}{H+1}\E[|W_1^H|] \quad a.s.,
\end{align*}
where the last equality follows from (\ref{e7.14}).
\end{proof}
}

%\beginsec
\section*{Acknowledgment}\label{ack}
The authors would like to thank Professors David Nualart and Yaozhong Hu for their helpful comments and suggestions on an earlier draft of this paper. The research work of C. Lee is supported in part by Army Research Office (W911NF-14-1-0216). % and National Security Agency (H98230-12-1-0250).

\bibliographystyle{plain}
\bibliography{chRefer20091}

\end{document}